\newtheorem{theorem}{Theorem}[section]
\newtheorem{lemma}[theorem]{Lemma}
\newtheorem{proposition}[theorem]{Proposition}
\newtheorem{corollary}[theorem]{Corollary}
\newtheorem{conjecture}[theorem]{Conjecture}
\newtheorem*{theorem*}{Theorem}
\theoremstyle{remark}
\newtheorem{remark}[theorem]{Remark}
\newtheorem{question}{Question}
\numberwithin{equation}{section}
\newcommand{\bP}{\mathbb{P}}
\newcommand{\bR}{\mathbb{R}}
\newcommand{\cA}{\mathcal{A}}
\newcommand{\cT}{\mathcal{T}}
\newcommand{\Cay}{\mathrm{Cay}}      
\begin{document}
\title[combinatorially random curves on surfaces]{Combinatorially random curves on surfaces}
\author{Tarik Aougab}
\address{Department of Mathematics \\ Haverford College \\ Haverford, PA 19041}
\email{taougab@haverford.edu}

\author{Jonah Gaster}
\address{Department of mathematical sciences \\ University of Wisconsin-Milwaukee \\ Milwaukee, WI 53211} 
\email{gaster@uwm.edu}

\date{\today}

\begin{abstract}

We study topological properties of random closed curves on an orientable surface $S$ of negative Euler characteristic. Letting $\gamma_{n}$ denote the conjugacy class of the $n^{th}$ step of a simple random walk on the Cayley graph driven by a measure whose support is a finite generating set, then with probability converging to $1$ as $n$ goes to infinity,
\begin{enumerate}
\item  the point in Teichm{\"u}ller space at which $\gamma_{n}$ is length-minimized does not escape to infinity when $S$ is closed; 
\item the self-intersection number of $\gamma_{n}$ is on the order of $n^{2}$, the minimum length of $\gamma_{n}$ taken over all hyperbolic metrics is on the order of $n$, and the metric minimizing the length of $\gamma_{n}$ is uniformly thick;
\item when $S$ is punctured and the distribution is uniform and supported on a generating set of minimum size, the minimum degree of a cover to which $\gamma_{n}$ admits a simple elevation (which we call the \textit{simple lifting degree} of $\gamma_{n}$) grows at least like $n/\log(n)$ and at most on the order of $n$.
\end{enumerate}
We also show that many of these properties are \textit{generic}, in the sense that the proportion of elements in the ball of radius $n$ in the Cayley graph for which they hold, converges to $1$ as $n$ goes to infinity. 

The lower bounds on simple lifting degree for randomly chosen curves we obtain significantly improve the previously best known bounds which were on the order of $\log^{(1/3)}n$. As applications, we give relatively sharp upper and lower bounds on the dilatation of a generic point-pushing pseudo-Anosov homeomorphism in terms of the self-intersection number of its defining curve, as well as upper bounds on the simple lifting degree of a random curve in terms of its intersection number which outperform bounds for general curves. 

\end{abstract}

\maketitle

\section{Introduction}

Let $S$ be a closed oriented surface of genus at least $2$ and let $\cT(S)$ indicate the Teichm\"uller space of marked hyperbolic structures on $S$. 
Let $\gamma$ be the free homotopy class of an immersed curve on $S$, and consider $\ell(\gamma):\cT(S)\to \bR$, the hyperbolic length function of $\gamma$. 
We are interested in the quantity $L(\gamma)= \inf \ell(\gamma)$, where the infimum is taken over $\cT(S)$ and, when it exists, the minimizing point $X(\gamma)\in \cT(S)$.
The aim of this paper is to study the quantities $L(\gamma)$, $X(\gamma)$ and their relationship to other invariants of curves, such as the self-intersection $\iota(\gamma,\gamma)$ or the simple lifting degree $\deg(\gamma)$ (defined to be the minimum degree of a finite cover $Y \rightarrow S$ so that $\gamma$ admits a simple elevation), when $\gamma$ is chosen ``at random''.

One version of the notion of a ``random curve'', well-studied in the literature, can be constructed by first choosing a marked hyperbolic metric $\sigma$, viewed as a point in $\cT(S)$. 
Because the unit tangent bundle of a finite-volume hyperbolic surface is itself finite-volume, one can choose a unit tangent vector on $S$ uniformly at random. 
Following this basepoint with the geodesic flow for a long time, one builds ``hyperbolically random'', or $\sigma-$\textit{random} curves $\gamma_T$ on $S$, roughly of $\sigma$-length $T$. 
In fact, an appropriate weighting of the resulting curves limit to the Liouville current of the hyperbolic surface.
This makes the quantities $L(\gamma_T)$, $\rho(\gamma_T)$, and the self-intersection $\iota(\gamma_T,\gamma_T)$ all fairly transparent. 
Since the Liouville current is length-minimized at $\sigma$, the length minimizing metric of $\gamma_T$ converges to $\sigma$: evidently the `random' story has been biased by the choice at the outset of a hyperbolic metric.

The aim of this paper is to study the situation when $\gamma$ is instead chosen in a combinatorially random fashion. Let $\cA=\{w_1,\ldots,w_{k}\}$ be a finite generating set for $\pi_1S$, let $\Cay_\cA(S)$ indicate the Cayley graph of $\pi_1S$ with respect to $\cA$, and let $\mu$ be a non-elementary probability distribution supported on $\cA$. Let $\gamma_n$ be the (unoriented conjugacy class) of the result of an $n$-step random walk on $\Cay_\cA(S)$ driven by $\mu$.

By $\bP_{w}$, ($w$ for \textit{walk}) we will mean the probability operator induced by a simple random walk on $\Cay_\cA(S)$, and by $\bP^{(n)}_{g}$ ($g$ for \textit{generic}), we will mean the probability operator associated to the uniform distribution on the ball of radius $n$ about the identity in $\Cay_\cA(S)$. 

We show that, in many ways, the geometric properties of a combinatorially random curve mirror those of a $\sigma$-random curve. In the setting of closed surfaces, we arrive at these properties by first analyzing the location in $\cT(S)$ of the length-minimizing metric for $\gamma_{n}$:

\begin{theorem} \label{thm: compact} 

Assume $S$ is closed. Then for almost every sample path $\left\{w_{n} \right\}$, there is a compact set $K \subset \cT(S)$ containing $X(\gamma_{n})$ for all $n$. 

Moreover, if $S$ is any surface of finite type, there is a universal constant $\epsilon >0$ so that 
\[  \bP_{w} \left[ X(\gamma_{n}) \hspace{2 mm} \mbox{is} \hspace{2 mm} \epsilon-\mbox{thick}  \right] \xrightarrow[\substack{n\to\infty}]{} 1, \]
and 
\[  \bP_{w} \left[ \ell_{X(\gamma_{n})}(\gamma_{n})  \ge \epsilon \cdot n  \right] \xrightarrow[\substack{n\to\infty}]{} 1. \]
The analogous statements also hold for $\bP_{g}$: the $\bP^{(n)}_{g}$ probability that $X(\gamma)$ is $\epsilon$-thick and that $\ell_{X(\gamma)}$ is at least $\epsilon \cdot n$, goes to $1$ as $n \rightarrow \infty$. 
\end{theorem}

In the setting of closed surfaces, one can use Theorem \ref{thm: compact} to  prove that the self-intersection number of a combinatorially random curve grows quadratically in the length of the walk. We also give an argument independent of Theorem \ref{thm: compact} which applies to all surfaces of finite type and which yield uniform quadratic lower bounds on intersection number, and thus we have:

\begin{theorem} \label{thm: intersection}

If $S$ is any finite type surface, then there is $L \ge 1$ so that 

\[\bP_{w} \left[ \frac{n^{2}}{L} \leq i(w_n,w_n ) \leq L \cdot n^{2}  \right] \xrightarrow[\substack{n\to\infty}]{} 1. \]

Moreover,  for $\gamma \in B_{n}$, 
 \[ \bP^{(n)}_{g} \left[ \frac{n^{2}}{L} \leq i(\gamma, \gamma) \leq L \cdot n^{2}  \right]  \xrightarrow[\substack{n\to\infty}]{} 1.\]

\end{theorem}

We point out that, in simplest terms, the random walk model counts \textit{paths} in the Cayley graph, whereas choosing uniformly at random in larger and larger balls counts \textit{elements}. Depending on the group, relationships may exist between the two models, but there is in general no straightforward way of converting from one to the other. 

\subsection{Simple lifting degree}

Given $\gamma \in \pi_{1}(S)$ and a finite degree cover $p:\Sigma \rightarrow S$, an \textit{elevation} of $\gamma$ to $\Sigma$ is a lift $\tilde{\gamma} \in \pi_{1}(\Sigma)$ of some power of $\gamma$ (see preliminaries for a more formal definition).

The \textit{simple lifting degree} of $\gamma$, denoted $\deg(\gamma)$, is the minimum $n$ such that there exists a degree $n$ cover $p: \Sigma \rightarrow S$ and an elevation $\tilde{\gamma}$ to $\Sigma$ such that $\tilde{\gamma}$ is a simple closed curve. A classical and celebrated result of Scott (\cite{Scott78}, \cite{Scott782}) implies that $\deg(\gamma)$ is always finite and thus well-defined. 

Simple lifting degree has been studied extensively, see for instance \cite{Patel14}, \cite{AGPS17}, \cite{Gaster16}, \cite{AC19}. For example, Patel shows that $\deg(\gamma)$ can be bounded above by a linear function of the length of a geodesic representative for $\gamma$ on a particular hyperbolic metric \cite{Patel14}. Aougab-Gaster-Patel-Sapir show that $\deg(\gamma)$ is bounded above by some linear function of the self-intersection number \cite{AGPS17} (with constants depending on the complexity of the underlying surface), and Arenas--Coto-Neumann improve this to the uniform estimate $\deg(\gamma)\le 5i(\gamma,\gamma)+5$ \cite[Thm.~1.1]{AC19}. 

In the other direction, Gaster gives examples of curves $\gamma$ such that $\deg(\gamma)$ is essentially $i(\gamma, \gamma)$ \cite{Gaster16}. However, it remains quite difficult to bound $\deg(\gamma)$ from below, especially in the generic setting. Gupta-Kapovich show that in the presence of punctures, with $\mathbb{P}_{w}$ probability converging to $1$, $\deg(\gamma_{n})$ grows faster than $\log(n)^{(1/3)}$ \cite{GuKa19}. We improve this lower bound to one that is almost linear in the length of the walk, and as above, our results hold for both random walks and generically chosen curves. 

\begin{theorem} \label{thm: simple lifting degree for punctures} 

If $S$ is punctured, $\cA$ is of minimum size, and $\mu$ is the uniformy distribution on $\cA$, then there is $\mathcal{M}>0$ so that 
\[ \bP_{w} \left[ \frac{1}{\mathcal{M}} \cdot \frac{n}{\log(n)} \leq \deg(\gamma_{n}) \leq \mathcal{M} \cdot n \right] \xrightarrow[\substack{n\to\infty}]{} 1. \]

Moreover, for $\cA$ any finite generating set and $B_{n}$ denoting the ball of radius $n$ in the associated Cayley graph, one has that for $\gamma \in B_{n}$,  
\[ \bP^{(n)}_{g} \left[ \frac{1}{\mathcal{M}} \cdot \frac{n}{\log(n)} \leq \deg(\gamma) \leq \mathcal{M} \cdot n \right] \xrightarrow[\substack{n\to\infty}]{} 1\]

\end{theorem}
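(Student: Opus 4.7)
The plan is to handle the two bounds separately; the upper bound is nearly immediate while the lower bound is the substantive content.

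\emph{Upper bound.} The word length $|\gamma_n|_\cA$ is at most $n$ in both models (trivially by the definition of a random walk and of $B_n$). Since $S$ is punctured, $\pi_1 S$ is free, and Patel's theorem \cite{Patel14} gives $\deg(\gamma)\le C\ell_\sigma(\gamma)\le C'|\gamma|_\cA$ for any fixed hyperbolic metric $\sigma$; applied to $\gamma_n$, this yields the upper bound with $\mathcal{M}=C'$.

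\emph{Lower bound.} Let $A_d:=\{\gamma\in\pi_1 S:\deg(\gamma)\le d\}$. I would bound $|A_d\cap B_n|$ as follows: each $\gamma\in A_d$ arises as $p_*\tilde\gamma$ for some cover $p\colon\Sigma_H\to S$ of degree at most $d$ together with a simple closed curve $\tilde\gamma$ on $\Sigma_H$, and if $|\gamma|_\cA\le n$ then $\tilde\gamma$ has combinatorial length at most $n$ when measured with respect to a lifted spine of $S$. Marshall Hall's formula bounds the number of index-$d$ subgroups of $F_r$ by $d\cdot (d!)^{r-1}$; and for each such cover $\Sigma_H$ (of Euler characteristic $d\chi(S)$, hence topological complexity $O(d)$), a Rivin--Mirzakhani-style count gives at most $Cn^{\alpha d}$ simple closed curves of combinatorial length at most $n$, with $C$ and $\alpha$ depending only on $S$. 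Combining,
\[ |A_d\cap B_n|\le d\cdot(d!)^{r-1}\cdot Cn^{\alpha d}=\exp\!\bigl(O(d\log d+d\log n)\bigr). \]
Taking $d=n/(K\log n)$ makes the exponent $O(n/K)$. Since $\log|B_n|\asymp n\log(2r-1)$, this immediately gives $\bP_g^{(n)}[\gamma\in A_d]\to 0$ provided $K$ is chosen large enough in terms of $r$. For $\bP_w$, a non-elementary walk on a free group has positive asymptotic entropy $h>0$; by Shannon--McMillan--Breiman, $\mu^{\ast n}(\gamma)\le e^{-hn+o(n)}$ off a set of negligible $\mu^{\ast n}$-measure, and summing over $A_d\cap B_{\lambda n}$ (with $\lambda$ the drift) yields
\[ \bP_w[\gamma_n\in A_d]\le e^{O(n/K)-hn+o(n)}+o(1)\to 0 \]
whenever $K$ is large relative to $1/h$. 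Together these give $\deg(\gamma_n)\ge n/(\mathcal{M}\log n)$ with probability tending to $1$ under both measures.

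\emph{Main obstacle.} The principal difficulty will be establishing the simple-curve count $Cn^{\alpha d}$ with constants uniform over the (infinite) family of covers. Mirzakhani-type asymptotic formulas carry implicit constants sensitive to the hyperbolic geometry of the individual cover, so the cleanest approach is likely purely combinatorial: counting cyclically reduced simple closed words in the labeled graph obtained by lifting a spine of $S$ (akin to a Stallings/Schreier construction), using Dehn--Thurston-type parameter bounds. The linear-in-$d$ exponent is exactly what is needed to produce the $n/\log n$ threshold; any weaker exponent in $d$ would substantially degrade the lower bound and only reproduce sub-logarithmic improvements over \cite{GuKa19}.
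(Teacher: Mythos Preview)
Your overall strategy matches the paper's closely. The upper bound via Patel is exactly what is done, and the lower-bound skeleton --- Hall's subgroup count times a simple-curve count on each degree-$d$ cover, yielding $\exp(O(d\log d + d\log n))$ and hence subexponential growth when $d=o(n/\log n)$ --- is precisely the content of Theorem~\ref{thm: generic lifting}. For the simple-curve count itself, the paper carries out essentially the Dehn--Thurston approach you anticipate in your final paragraph: one fixes a hyperbolic metric on $S$ assembled from congruent pairs of pants, pulls it back to each cover $Y$, extends the lifted pants multicurve to a full pants decomposition $P_Y$ whose curves have length polynomial in $d$ (Lemma~\ref{lem: good pants on Y}, via Balacheff--Parlier--Sabourau), and then bounds each intersection and twist coordinate of a simple elevation by an explicit $Q(d)$ polynomial in $d$ and linear in $n$. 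This gives the count $Q(d)^{d\cdot\dim\cT(S)}$ uniformly over covers, resolving exactly the uniformity obstacle you flag.

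There is, however, a genuine gap. Your inequality $|A_d\cap B_n|\le d\cdot(d!)^{r-1}\cdot Cn^{\alpha d}$ is really a bound on \emph{conjugacy classes}, not elements: the assignment $(\text{cover},\ \text{simple curve})\mapsto p_*\tilde\gamma$ lands in free homotopy classes, and a single conjugacy class can meet $B_n$ in exponentially many elements. The paper handles this explicitly via Lemma~\ref{lem: bounding conjugacy}: in a free group with a free basis one has $\#([c]\cap B_n)\le n\cdot\#(B_{(n-\|c\|)/2})$, so multiplying the conjugacy-class count by $n\cdot\#(B_{n/2})$ still loses to $\#(B_n)$ and the $\bP_g^{(n)}$ conclusion follows. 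Without this step your $\bP_g^{(n)}$ estimate does not close. (A minor related slip: an elevation of $\gamma\in B_n$ to a degree-$d$ cover has combinatorial length up to $dn$, not $n$; this does not change the final exponent.)

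Your $\bP_w$ argument via Shannon--McMillan--Breiman is a genuinely different route. The paper restricts (Remark~\ref{any gen set}) to $\mu$ uniform on a free basis and exploits that the Cayley graph is then a tree: the number of length-$n$ sample paths terminating at $x$ depends only on $d(1,x)$, and combined with drift concentration (Theorem~\ref{thm: limit of progress exists}) this reduces $\bP_w$ to the $\bP_g$ statement on spheres (Proposition~\ref{prop: lifting degree for walks with punctures}). Your entropy approach is potentially more flexible, but it too needs the conjugacy-class-to-element conversion above, after which closing the estimate requires $h>\tfrac{1}{2}\lambda v$ (entropy exceeds half of drift times volume growth). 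This holds for the simple walk on $F_r$, where in fact $h=\lambda v$, but is not automatic for an arbitrary non-elementary $\mu$, so your claimed generality overreaches what the sketch actually delivers.
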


When $S$ is closed, we prove a genericity result for lower bounds on lifting degree that pertains to conjugacy classes:

\begin{theorem} \label{thm: simple lifting degree for closed surfaces} 

Let $S$ be a closed surface, and let $C_{n}$ denote the set of all conjugacy classes admitting a representative of word length at most $n$ (with respect to any finite generating set). Then there exists some $\mathcal{Q} >0$
\[ \frac{\#( [c] \in C_{n} : \deg([c]) \geq \mathcal{Q} \cdot n/\log(n) )}{\#(C_{n})}  \xrightarrow[\substack{n\to\infty}]{} 1. \]

\end{theorem}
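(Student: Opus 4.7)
The plan is to bound directly the number of ``bad'' conjugacy classes in $C_{n}$---those $[c]$ with $\deg([c]) < D$, for $D := \mathcal{Q}\cdot n/\log n$---and compare this count to the standard exponential lower bound $\#C_{n} \gtrsim \lambda^{n}/n$, where $\lambda>1$ is the exponential growth rate of $\pi_{1}(S)$ with respect to $\cA$. It then suffices to choose $\mathcal{Q}$ so small that the bad count grows like $e^{o(n\log\lambda)}$.

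Each bad $[c]$ is witnessed by at least one pair $(p,[\tilde c])$, where $p:\Sigma\to S$ is a cover of degree $d\le D$ and $[\tilde c]\in\pi_{1}(\Sigma)$ is a simple conjugacy class whose projection is a positive power of $c$. Fix a hyperbolic metric $\sigma$ on $S$; since word length on $\cA$ and $\sigma$-translation length are quasi-isometric on the closed surface group, any $[c]\in C_{n}$ has $\sigma$-length at most $C_{1}n$, and hence $\tilde c$ has $p^{*}\sigma$-length at most $d\cdot C_{1} n\le DC_{1}n$. Overcounting bad classes by such pairs yields
\[
\#\{[c]\in C_{n} : \deg([c])<D\} \ \le\ \sum_{d=1}^{D} s_{d}(\pi_{1}S)\cdot N_{d}(DC_{1}n),
\]
where $s_{d}(\pi_{1}S)$ is the number of index-$d$ subgroups of $\pi_{1}(S)$ and $N_{d}(L)$ is the maximum, over degree-$d$ covers $\Sigma$, of the number of simple closed geodesics in $\Sigma$ of $p^{*}\sigma$-length at most $L$.

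Two ingredients then close the argument. First, subgroup growth of closed surface groups gives $s_{d}(\pi_{1}S) = e^{O(d\log d)}$ via enumeration of permutation representations $\pi_{1}(S)\to\mathrm{Sym}(d)$, using only that $\pi_{1}(S)$ has $2g$ generators. Second, a uniform polynomial bound $N_{d}(L)\le (CL)^{O(d)}$, with $C$ depending only on $S$, follows by lifting a fixed pants decomposition of $S$ to $\Sigma$ and reading off the possible Dehn--Thurston coordinates of a simple closed curve of length at most $L$. Combining, the bad count is at most $e^{O(D\log(Dn))}$, which with $D=\mathcal{Q}n/\log n$ collapses to $e^{O(\mathcal{Q}n)}$; choosing $\mathcal{Q}$ small enough that this is $o(\lambda^{n}/n)$ concludes.

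The main obstacle is the uniform simple-curve count: Mirzakhani's asymptotic carries a constant depending on the hyperbolic structure, and one needs this constant not to degrade with the degree of the cover. The combinatorial route via the lifted pants decomposition handles this, since the lifted cuffs inherit a uniform lower length bound from $S$ and Dehn--Thurston coordinates then count simple curves with constants depending only on $S$. This is in the same spirit as the uniform topological bounds on $\deg$ exploited in \cite{AC19, AGPS17}, adapted here from a single-curve estimate to a counting statement, and is what lets the coarse overcount beat the exponential growth of $\#C_{n}$.
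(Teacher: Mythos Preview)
Your overall strategy is the same as the paper's: overcount bad conjugacy classes by pairs (finite cover, simple elevation), bound the number of covers by subgroup growth, bound the number of simple elevations via Dehn--Thurston coordinates, and play the resulting $e^{O(D\log(Dn))}$ against the exponential growth of $\#C_n$. Your endgame---writing $D=\mathcal{Q}n/\log n$, getting $e^{O(\mathcal{Q}n)}$, and choosing $\mathcal{Q}$ small---is in fact stated more transparently than in the paper.

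There is, however, a real technical gap in your second ingredient. The preimage of a pants decomposition of $S$ under a degree-$d$ cover $\Sigma\to S$ is in general \emph{not} a pants decomposition of $\Sigma$: a pair of pants can lift to a subsurface of higher complexity. So you cannot simply ``read off Dehn--Thurston coordinates'' from the lifted cuffs. The paper confronts exactly this point: it extends the lifted multi-curve to a full pants decomposition of $\Sigma$ by invoking Balacheff--Parlier--Sabourau (or an elementary iterative argument) to add curves of length at most a polynomial $\mathcal{F}(d\cdot|\chi(S)|, d\cdot c)$ in each complementary piece, and then carefully bounds the intersection numbers of $\tilde c$ with these added curves by decomposing each lifted pair of pants into hexagons. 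The upshot is that the correct uniform bound is $N_d(L)\le (C\cdot\mathrm{poly}(d)\cdot L)^{O(d)}$ rather than your $(CL)^{O(d)}$. This weaker bound still feeds into the same $e^{O(D\log(Dn))}$ conclusion, so your final arithmetic survives unchanged---but the ``main obstacle'' you identify (degradation of Mirzakhani's constant) is not the one that actually bites, and your appeal to the lifted cuffs' uniform lower length bound only handles the twist coordinates, not the intersection coordinates against the curves that must be added.
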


As a corollary of the lower bounds on self-intersection in Theorem \ref{thm: intersection}, we prove that for ``random'' curves on a closed surface, the linear upper bounds on simple lifting degree from Arenas--Coto-Neumann \cite{AC19} or from Aougab-Gaster-Patel-Sapir \cite{AGPS17} in terms of intersection number can be improved to a square root upper bound: 

\begin{corollary} \label{better degree versus intersection} When $S$ is closed, there is some $J \geq 1$ so that 
\[ \bP_{w} \left[ \deg(\gamma_{n}) <   J \cdot \sqrt{i(\gamma_{n}, \gamma_{n})} \right] \xrightarrow[\substack{n\to\infty}]{} 1, \]
and similarly for $\gamma \in B_{n}$, 
\[\bP_{g}\left[ \deg(\gamma) <   J \cdot \sqrt{i(\gamma, \gamma)} \right] \xrightarrow[\substack{n\to\infty}]{} 1.  \]
\end{corollary}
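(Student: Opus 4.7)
The plan is to obtain the square-root upper bound by combining a linear-in-$n$ upper bound on $\deg(\gamma_n)$ with the quadratic-in-$n$ lower bound on self-intersection from Theorem \ref{thm: intersection}. The key observation is that $\deg(\gamma_n) = O(n)$ always, while $i(\gamma_n,\gamma_n) = \Omega(n^2)$ generically, so $\deg(\gamma_n) = O(\sqrt{i(\gamma_n,\gamma_n)})$ generically even though in general only the linear bound $\deg \le 5i + 5$ of Arenas--Coto-Neumann holds.

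Step 1: Produce a deterministic linear upper bound $\deg(\gamma_n) \le C \cdot n$. Fix any hyperbolic metric $\sigma \in \cT(S)$ and set $C_{1} = \max_{i} \ell_{\sigma}(w_{i})$, so that any curve represented by a word of length at most $n$ in $\cA$ has geodesic length at most $C_{1} \cdot n$ in $\sigma$. Patel's theorem \cite{Patel14} then gives a constant $C_{2} = C_{2}(\sigma)$ with $\deg(\gamma) \le C_{2} \cdot \ell_{\sigma}(\gamma)$ for all closed curves $\gamma$. Thus, for any $\gamma \in B_n$ (and in particular for $\gamma_n$), we have $\deg(\gamma) \le C_{1}C_{2} \cdot n$.

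Step 2: Invoke Theorem \ref{thm: intersection} for the closed surface $S$. With $\bP_{w}$-probability tending to $1$ (and with $\bP^{(n)}_{g}$-probability tending to $1$ for $\gamma \in B_n$), we have $i(\gamma_n,\gamma_n) \ge n^{2}/L$ for the constant $L$ from Theorem \ref{thm: intersection}. Rearranging, $n \le \sqrt{L}\cdot \sqrt{i(\gamma_n,\gamma_n)}$ on this event.

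Step 3: On the intersection of the event in Step 2 with the deterministic bound of Step 1, we have
\[ \deg(\gamma_n) \le C_{1}C_{2}\cdot n \le C_{1}C_{2}\sqrt{L}\cdot \sqrt{i(\gamma_n,\gamma_n)}. \]
Setting $J = C_{1}C_{2}\sqrt{L}$ and noting that the event has probability converging to $1$ under each of $\bP_{w}$ and $\bP^{(n)}_{g}$ yields both conclusions.

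There is really no obstacle here beyond citing the right input: the only nontrivial ingredient is Patel's linear upper bound on $\deg$ in terms of geodesic length, which decouples $\deg$ from the particular combinatorial structure of $\gamma_n$ and forces it to scale with the word length rather than with the (much larger) intersection number. The point of the corollary is simply that the generic quadratic blow-up of $i(\gamma_n,\gamma_n)$ established in Theorem \ref{thm: intersection} makes this linear-in-$n$ estimate automatically sublinear in $i(\gamma_n,\gamma_n)$.
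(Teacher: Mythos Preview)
Your proof is correct and follows essentially the same approach as the paper: a deterministic linear-in-$n$ upper bound on $\deg$ via Patel's theorem, combined with the quadratic-in-$n$ lower bound on self-intersection from Theorem~\ref{thm: intersection}, yielding the square-root bound. The paper's proof is terser but the logic is identical; your Step~1 is exactly what the paper alludes to when it points to ``the beginning of Section~\ref{section: lifting}'' for the linear upper bound.
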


\begin{proof} It is easy to see that any conjugacy class represented by a word of length at most $n$ has simple lifting degree at most on the order of $n$ (see the beginning of Section \ref{section: lifting} for the details). On the other hand, with $\bP_{w}$ (resp. $\bP_{g}$) probability converging to $1$ as $n \rightarrow \infty$, $\gamma_{n}$ (resp. $\gamma \in B_{n}$) has self intersection number at least on the order of $n^{2}$. The result follows. 
\end{proof}

\subsection{Random point-pushing maps} 

In the setting where $S= (S,p)$ has a preferred marked point or puncture $p$, one has the celebrated Birman exact sequence of mapping class groups associated to forgetting the puncture:
\[ 1 \rightarrow \pi_{1}(S,p) \rightarrow \mathcal{MCG}(S,p) \xrightarrow[]{\pi} \mathcal{MCG}(S) \rightarrow 1. \]
The kernel of $\pi$ is isomorphic to $\pi_{1}(S,p)$ and is called the \textit{point-pushing subgroup} of $\mathcal{MCG}(S,p)$ (see the preliminaries section for more details). 

We are motivated by the following general question: 
\begin{question} \label{comb point push}
Let $\gamma \in \pi_{1}(S,p)$ and suppose the corresponding point-pushing mapping class $f_{\gamma}$ is pseudo-Anosov. How can one relate dynamical properties of $f_{\gamma}$ (such as its translation length in Teichm{\"u}ller space or in the curve complex) to combinatorial or topological properties of the underlying curve $\gamma$? 
\end{question}

Question \ref{comb point push} seems quite difficult in general. As of now, the deepest result addressing it is due to Dowdall \cite{Dowdall11} who relates the dilatation $\mbox{dil}(f_{\gamma})$ of $f_{\gamma}$ to the self-intersection number of $\gamma$ as follows: 
\[ (i(\gamma, \gamma)+ 1)^{(1/5)} \leq \mbox{dil}(f_{\gamma}) \leq 9^{i(\gamma, \gamma)}. \]

Using Theorems \ref{thm: compact} and \ref{thm: intersection}, we can drastically improve these bounds for random or generic point-pushing maps: 

\begin{theorem} \label{thm: point push} 
There is $R>0$ so that 
\[ \bP_{w} \left[ e^{\sqrt{\frac{i(\gamma_{n}, \gamma_{n})}{R}}} \leq \mbox{dil}(f_{\gamma_{n}}) \leq e^{\sqrt{R \cdot i(\gamma_{n}, \gamma_{n})}} \right] \xrightarrow[\substack{n\to\infty}]{} 1.  \]

Similarly, for $\gamma \in B_{n}$, 

\[ \bP^{(n)}_{g}\left[ e^{\sqrt{\frac{i(\gamma, \gamma)}{R}}} \leq \mbox{dil}(f_{\gamma}) \leq e^{\sqrt{R \cdot i(\gamma, \gamma)}} \right] \xrightarrow[\substack{n\to\infty}]{} 1. \]

\end{theorem}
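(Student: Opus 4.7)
The plan is to bracket $\log\mbox{dil}(f_{\gamma_n})$ between two constant multiples of $\sqrt{i(\gamma_n,\gamma_n)}$ by routing through the hyperbolic length $\ell_{X(\gamma_n)}(\gamma_n)$. The key non-probabilistic input is a two-sided dilatation-length comparison of the form
\[
A^{-1}\cdot \ell_{X(\gamma)}(\gamma) - A \;\le\; \log\mbox{dil}(f_\gamma) \;\le\; B\cdot \ell_{X(\gamma)}(\gamma),
\]
valid with uniform constants $A, B>0$ whenever $X(\gamma)$ lies in a fixed compact region $\mathcal{K}\subset\cT(S)$. The upper bound follows from $\log\mbox{dil}(f_\gamma)\le d_{\cT(S)}(X(\gamma),f_\gamma X(\gamma))$ combined with a standard quasi-conformal representative of $f_\gamma$ supported in an annular neighborhood of a geodesic representative of $\gamma$, with dilatation controlled in terms of $\ell_{X(\gamma)}(\gamma)$. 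The lower bound is subtler, requiring that the Teichm\"uller axis of $f_\gamma$ lie within bounded distance of $X(\gamma)$, together with Bers-type translation-length estimates along the axis.

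By Theorem \ref{thm: compact}, with probability tending to $1$ in either the walk or generic model, $X(\gamma_n)$ exists (in particular $\gamma_n$ fills, so that $f_{\gamma_n}$ is indeed pseudo-Anosov) and lies in a fixed compact $\mathcal{K}\subset\cT(S)$ containing $X_\cA$, so the comparison above applies with uniform constants. Minimality of $X(\gamma_n)$ gives $\ell_{X(\gamma_n)}(\gamma_n)\le \ell_{X_\cA}(\gamma_n)\le C_0\cdot n$, where the second inequality uses that the word length of $\gamma_n$ is at most $n$ and the generators have bounded hyperbolic length at the fixed metric $X_\cA$. Combined with $i(\gamma_n,\gamma_n)\ge n^2/L$ from Theorem \ref{thm: intersection}, this yields $\ell_{X(\gamma_n)}(\gamma_n)\le C_0\sqrt{L\cdot i(\gamma_n,\gamma_n)}$ with high probability.

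For the reverse direction, I invoke the standard quadratic estimate that for any $Y$ in a compact subset $\mathcal{K}\subset\cT(S)$ and any closed curve $\gamma$,
\[
\ell_Y(\gamma)^2 \;\ge\; c_\mathcal{K}\cdot i(\gamma,\gamma),
\]
with $c_\mathcal{K}>0$ depending only on a lower bound for the injectivity radius across $\mathcal{K}$; this follows because a geodesic in the thick part has at most quadratically many self-intersections as a function of its length. Taking $Y=X(\gamma_n)$ gives $\ell_{X(\gamma_n)}(\gamma_n)\ge\sqrt{c_\mathcal{K}\cdot i(\gamma_n,\gamma_n)}$. Inserting both bounds into the dilatation-length comparison yields $\log\mbox{dil}(f_{\gamma_n})\asymp \sqrt{i(\gamma_n,\gamma_n)}$ with probability tending to $1$; since $i(\gamma_n,\gamma_n)\to\infty$ with high probability the additive $-A$ is harmless, and after renaming constants this gives the theorem in both probabilistic models.

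The main obstacle is securing the lower bound direction of the dilatation-length comparison in the strong form needed: Dowdall's bound $\mbox{dil}(f_\gamma)\ge (i(\gamma,\gamma)+1)^{1/5}$ yields only $\log\mbox{dil}\gtrsim \log i(\gamma,\gamma)$, which is far too weak, whereas I require $\log\mbox{dil}\gtrsim\ell_{X(\gamma)}(\gamma)\asymp\sqrt{i(\gamma,\gamma)}$. Controlling the Teichm\"uller axis of $f_\gamma$ relative to $X(\gamma)$ is the key technical point; if a direct axis-translation estimate proves cumbersome, a plausible workaround is to bypass the metric comparison entirely by proving $\log\mbox{dil}(f_{\gamma_n})\gtrsim n$ directly, using ballistic properties of the induced random walk on $\mathrm{Mod}(S,p)$ together with positive drift of the Teichm\"uller translation length along typical trajectories.
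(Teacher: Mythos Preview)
Your primary route has a real gap exactly where you flag it: the deterministic lower bound $\log\mbox{dil}(f_\gamma)\gtrsim \ell_{X(\gamma)}(\gamma)$, valid uniformly over filling $\gamma$ with $X(\gamma)$ in a fixed compact set, is not a standard fact, and the sketch via ``Teichm\"uller axis near $X(\gamma)$ plus Bers-type estimates'' does not constitute a proof. There is no obvious reason the axis of $f_\gamma$ should pass uniformly close to $X(\gamma)$, and Dowdall's inequality, as you note, is far too weak to rescue this step. So the main plan as written does not close.

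Your ``plausible workaround'' is precisely what the paper does, and it is in fact simpler than your primary route because it never passes through $\ell_{X(\gamma)}(\gamma)$ at all. The paper bounds $\log\mbox{dil}(f_{\gamma_n})$ directly in terms of $n$ on both sides: the lower bound comes from Maher--Tiozzo (Theorem~\ref{thm:MT}) applied to the action of the point-pushing subgroup on the curve complex, giving $\tau_{\mathcal{C}}(f_{\gamma_n})\ge L n$ with high probability, and then the coarse Lipschitz systole map $\mbox{sys}:\cT(S)\to\mathcal{C}(S)$ converts this to $\tau_{\cT}(f_{\gamma_n})\ge (L/K)n$; the upper bound is just that $f_{\gamma_n}$ has word length at most $n$ in $\mathcal{MCG}(S,p)$ and the orbit map to $\cT(S)$ is coarsely Lipschitz, so $\tau_{\cT}(f_{\gamma_n})\le Jn$. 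Then one invokes Theorem~\ref{thm: intersection} to get $n^2/L'\le i(\gamma_n,\gamma_n)\le L'n^2$ with high probability and substitutes $n\asymp\sqrt{i(\gamma_n,\gamma_n)}$. The $\bP_g$ case is identical with Theorem~\ref{thm: GTT} replacing Theorem~\ref{thm:MT}. No appeal to Theorem~\ref{thm: compact}, to $X(\gamma_n)$, or to any dilatation--length comparison is needed.
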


\subsection{Questions and conjectures}

\subsubsection{Locating length minimizers} In analogy with the way in which properties of a $\sigma$-random curve are ``biased'' by the choice of hyperbolic metric, we conjecture that geometric and topological properties of $\gamma_{n}$ are biased by the choice of generating set. We formalize this by conjecturing a strengthening of Theorem \ref{thm: compact} which states that $X(\gamma_{n})$ remains uniformly close to the point $X_{\cA}$ minimizing the hyperbolic length of the rose for the generating set $\cA$: 

\begin{conjecture} \label{conj: compact} There is some $D>0$ so that 
\[ \bP_{w} \left[ d_{\cT}(X(\gamma_{n}), X_{\cA}) \le D \right] \xrightarrow[\substack{n\to\infty}]{} 1,\]
where $d_{\cT}$ denotes distance in the Teichm\"{u}ller metric. Similarly, for $\gamma \in B_{n}$, 

\[ \bP_{g}^{(n)}\left[ d_{\cT}(X(\gamma), X_{\cA}) \le D \right] \xrightarrow[\substack{n\to\infty}]{} 1. \]

\end{conjecture}

Theorem \ref{thm: compact} states that for almost every sample path, $X(\gamma_{n})$ remains some uniformly bounded distance away from $X_{\cA}$, but this distance can a priori grow as one moves from sample path to sample path. 

\subsubsection{Counting in a conjugacy class} The strategy for proving the lower bounds on simple lifting degree in Theorem \ref{thm: simple lifting degree for punctures} is to first prove the sort of genericity result for conjugacy classes stated in Theorem \ref{thm: simple lifting degree for closed surfaces} (see Section \ref{strategy} for a brief description of the idea); indeed, when proving this genericity statement for conjugacy classes, we handle both closed and punctured surfaces simultaneously. The results in Theorem \ref{thm: simple lifting degree for punctures} then follow by giving an upper bound which has a strictly smaller exponential growth rate than the group itself for the intersection of any conjugacy class with a ball of radius $n$ in the Cayley graph (see Lemma \ref{lem: bounding conjugacy}). Let $\mathfrak{k}_{n}$ denote this quantity, i.e., the maximum --taken over all conjugacy classes $\mathfrak{c}$-- of the size of the intersection $\mathfrak{c} \cap B_{n}$. 

We conjecture that similar bounds on $\mathfrak{k}_{n}$ hold in the setting of a surface group with a standard generating set; indeed, it is not too hard to see that in any hyperbolic group, the exponential growth rate of a fixed conjugacy class is always half the growth of the parent group (see for instance \cite{PP15} for a proof). Proofs of this fact rely on standard properties of hyperbolicity, for example the fact that the centralizer of an element is quasi-convex. Crucially, these arguments apply in the regime when a conjugacy class is fixed once and for all. In general, the quality of the quasi-convexity of a centralizer $C_{\gamma}$ depends on the word length of $\gamma$, so obtaining bounds on the \textit{maximum} possible size-- over all conjugacy classes-- of $\mathfrak{c} \cap B_{n}$ seems like a potentially more delicate problem in general. 

This motivates the following two conjectures; when combined with the proofs in Section \ref{section: lifting}, verifying them would imply a version of Theorem \ref{thm: simple lifting degree for punctures} for closed surfaces: 

\begin{conjecture} Let $\Gamma_{g}$ denote the Cayley graph of a closed surface group with respect to a standard generating set. Then there is some polynomial $p$ so that 

\[ \max_{\mathfrak{c}} \#(\mathfrak{c} \cap B_{n}) \leq p(||\mathfrak{c}||) \cdot \#(B_{(n- ||\mathfrak{c}||)/2}), \]
where $||\mathfrak{c}||$ denotes the minimum word length of a representative of the conjugacy class $\mathfrak{c}$. 

\end{conjecture}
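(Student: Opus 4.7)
The plan is to identify $\mathfrak{c} \cap B_n$ with a set of cosets of the centralizer of a shortest representative, and apply Gromov-hyperbolic orbit counting. Fix $g_0 \in \mathfrak{c}$ with $|g_0| = \|\mathfrak{c}\|$. Since $\pi_1(S)$ is torsion-free hyperbolic with respect to any standard generating set, the centralizer of $g_0$ is an infinite cyclic subgroup $\langle r \rangle$ for the unique root $r$ of $g_0$, and the conjugation map $x \mapsto xg_0 x^{-1}$ induces a bijection between $\pi_1(S)/\langle r\rangle$ and $\mathfrak{c}$. Thus
$$\#(\mathfrak{c}\cap B_n)=\#\bigl\{x\langle r\rangle : |xg_0x^{-1}|\leq n\bigr\}.$$

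By the standard hyperbolic formula $|xgx^{-1}| = 2\,d(1, x\ell_g) + \tau(g) + O(\delta)$ applied to $g = g_0$ (where $\ell_g$ is a quasi-axis and $\tau(g)$ is stable translation length), together with the Gromov--Delzant bound $\bigl|\tau(g_0) - \|\mathfrak{c}\|\bigr|= O(\delta)$, a coset $x\langle r\rangle$ contributes only when $d(1, x\ell_{g_0}) \leq \tfrac{1}{2}(n - \|\mathfrak{c}\|) + O(\delta)$. The coset lies within $O(\delta)$ of $x\ell_{g_0}$ at axial spacing $\tau(r)\leq\|\mathfrak{c}\|+O(\delta)$, so a shortest-in-coset representative has word length at most $\tfrac{1}{2}(n-\|\mathfrak{c}\|)+\tfrac{1}{2}\tau(r)+O(\delta)$. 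Distinct cosets have distinct shortest representatives, yielding
$$\#(\mathfrak{c}\cap B_n)\leq \#\bigl(B_{(n-\|\mathfrak{c}\|)/2 + \tau(r)/2 + O(\delta)}\bigr),$$
which is merely exponential in $\|\mathfrak{c}\|$.

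To sharpen to the polynomial bound conjectured, I would use double counting: setting $L = \tfrac{1}{2}(n - \|\mathfrak{c}\|) + T$ for a free parameter $T$,
$$\#(\mathfrak{c}\cap B_n) \cdot \min_X \#(X\cap B_L) \leq \#\bigl\{x\in B_L : |xg_0x^{-1}|\leq n\bigr\},$$
where $X$ ranges over cosets contributing to $\mathfrak{c}\cap B_n$. For $T$ larger than $\tau(r)$, each such coset meets $B_L$ in $\gtrsim T/\tau(r)$ elements, while the right-hand side is restricted by the generic behavior $|xg_0x^{-1}|\approx 2|x|+\|\mathfrak{c}\|$ to be comparable to $(T/\tau(r))\cdot \#(B_{(n-\|\mathfrak{c}\|)/2})$ rather than to all of $\#(B_L)$. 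Optimizing $T$ is expected to produce the conjecture with polynomial $p$, in line with the Parkkonen--Paulin-type asymptotic $\#(\mathfrak{c}\cap B_n)\sim C_{\mathfrak{c}}\, \lambda^{n/2}$ for fixed $\mathfrak{c}$ together with the heuristic $C_{\mathfrak{c}} \cdot \lambda^{\|\mathfrak{c}\|/2} = \mathrm{poly}(\|\mathfrak{c}\|)$ coming from the free-group model.

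The principal obstacle is to make the lower bound $\#(X \cap B_L)\gtrsim T/\tau(r)$ hold \emph{uniformly} across all cosets with conjugate in $B_n$, not merely on average. An exceptional coset may have its nearest representative to $1$ displaced by nearly $\tau(r)/2$ along the axis from the projection of $1$, so that $B_L$ contains only a single element even when $T\sim\tau(r)$. Controlling the number of such exceptional cosets---or equivalently, showing that the projection of $1$ onto typical translates $x\ell_{g_0}$ lies close to an orbit point of $\langle r\rangle$---requires sharper control on the axis geometry than generic hyperbolicity supplies. A plausible route is to exploit the specific combinatorics of the standard surface group presentation (Dehn's algorithm, or fine fellow-traveler properties for nearby axes) to show that this displacement statistic is well-equidistributed as $\|\mathfrak{c}\|$ varies.
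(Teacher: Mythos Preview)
This statement is a \emph{conjecture} in the paper, not a theorem: the authors do not prove it and explicitly leave it open. There is therefore no paper proof to compare your proposal against.

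Your proposal is not a proof either, and you say so yourself. The first half (identifying $\mathfrak{c}\cap B_n$ with cosets of the cyclic centralizer and using the hyperbolic estimate $|xg_0x^{-1}| \approx 2\,d(1,x\ell_{g_0}) + \|\mathfrak{c}\|$) is the standard Parkkonen--Paulin argument the paper cites, and it correctly gives the exponential-in-$\|\mathfrak{c}\|$ bound. The double-counting refinement you sketch is plausible but, as you acknowledge, incomplete: the uniform lower bound $\#(X\cap B_L)\gtrsim T/\tau(r)$ over \emph{all} contributing cosets is exactly the missing ingredient, and you have not supplied it.

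This gap is the same one the paper identifies in the remark surrounding the conjecture: the quasi-convexity constants for centralizers degrade with $\|\mathfrak{c}\|$, so arguments that work for a fixed conjugacy class do not automatically give uniform control over all $\mathfrak{c}$. Your suggestion to exploit Dehn's algorithm or the specific combinatorics of the standard presentation is reasonable as a direction, but until the equidistribution of the axial displacement is actually established, the conjecture remains open. In short, your proposal correctly locates the difficulty but does not resolve it; this is consistent with the paper, which also does not resolve it.
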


\begin{conjecture} \label{maximizing over conjugacy} Let $\Gamma_{g}$ be as above. Then there is some constant $\rho < 1$ so that
\[ \max_{\mathfrak{c}}\bP_{w}(w_{n} \in \mathfrak{c}) \leq \rho^{n}. \]
\end{conjecture}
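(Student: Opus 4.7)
The plan is to combine the preceding conjecture with the random walk theory of non-elementary hyperbolic groups, splitting the set of conjugacy classes according to word length. Specifically, one would appeal to three inputs: (i) the preceding conjecture, bounding $\#(\mathfrak{c}\cap B_n)$ by $p(||\mathfrak{c}||) \cdot \#B_{(n-||\mathfrak{c}||)/2}$; (ii) the fact that surface groups are non-amenable, so that a symmetric non-degenerate random walk has spectral radius $\rho_0 < 1$, yielding $\bP_{w}(w_n = g) \leq C\rho_0^n$ uniformly in $g$ via reversibility and Cauchy--Schwarz; and (iii) the theorem of Maher and Tiozzo that for non-elementary walks on a hyperbolic group, the word-metric stable translation length satisfies $\tau(w_n) \geq \beta n$ with $\bP_{w}$-probability at least $1 - e^{-c_0 n}$ for some constants $\beta, c_0 > 0$.

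\textbf{The dichotomy.} Given $\mathfrak{c}$, one splits on whether $||\mathfrak{c}||$ is large or small relative to $n$. If $||\mathfrak{c}|| < \beta n$, then $w_n \in \mathfrak{c}$ forces $\tau(w_n) \leq ||\mathfrak{c}|| < \beta n$, and by (iii) this occurs with probability at most $e^{-c_0 n}$. If instead $||\mathfrak{c}|| \geq \beta n$, then by (i), $\#(\mathfrak{c}\cap B_n) \leq p(n) \cdot \lambda^{(1-\beta)n/2}$, where $\lambda$ is the exponential growth rate of $\Gamma_g$; combined with the uniform bound in (ii) this gives
\[ \bP_{w}(w_n \in \mathfrak{c}) \leq C \, p(n) \, \rho_0^n \, \lambda^{(1-\beta)n/2}. \]
Since the two cases exhaust all $\mathfrak{c}$ and the bounds are uniform in $\mathfrak{c}$, it would then remain only to check that both right-hand sides decay like $\rho^n$ for a single $\rho < 1$.

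\textbf{Main obstacle.} The delicate point is the inequality $\rho_0 \cdot \lambda^{(1-\beta)/2} < 1$ needed to close the long case. This couples three a priori unrelated quantities --- the spectral radius of $\mu$, the volume growth of the Cayley graph, and the translation rate of the walk --- and need not follow from the pointwise spectral bound alone; for instance, for the simple random walk on $F_2$ one has $\rho_0 = \sqrt{3}/2$, $\lambda = 3$, and $\beta \leq 1/2$, and the product exceeds $1$. Making the argument work therefore seems to require stronger input than a crude uniform bound on $\bP_{w}(w_n=g)$: either Gou\"ezel's local limit theorem (sharpening the pointwise bound to $\sim C\rho_0^n/n^{3/2}$ times a Green function in $|g|$), or Ancona-type estimates providing additional decay in $|g|$, which would let the mass near the drift sphere be compared directly to the conjugacy class counts there. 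Alternatively, one might bypass the spectral bound entirely by a symbolic/automatic coding of $\Gamma_g$ and argue via exponential decay of cylinder probabilities for cyclically reduced words in the resulting subshift of finite type. In any formulation, the essential difficulty is pairing the walk measure tightly enough with the volume-growth side of the identity $\sum_{\mathfrak{c}}\bP_{w}(w_n \in \mathfrak{c}) = 1$ to extract a uniform exponential gap.
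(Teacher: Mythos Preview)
The statement you are addressing is explicitly a \emph{conjecture} in the paper; the authors do not prove it and, in the discussion immediately following, explain that they regard the uniformity over conjugacy classes as the essential open difficulty. So there is no paper proof to compare against.

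Your proposal is a reasonable sketch of an attack, but as you yourself recognize, it is not a proof. Two points deserve emphasis.

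\textbf{Dependence on another open conjecture.} Your input (i) is precisely the preceding conjecture in the paper, which is also unproven. Even if the numerical obstacle in the long case were resolved, your argument would only reduce one conjecture to the other.

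\textbf{The numerical obstacle is real and you have located it correctly.} In the long case the crude pointwise bound $\bP_w(w_n=g)\le C\rho_0^n$ combined with the conjugacy-class count yields $\rho_0^n\,\lambda^{(1-\beta)n/2}$, and your $F_2$ computation shows this product can exceed $1$. Your instinct to replace the flat spectral bound by something that also decays in $|g|$ --- Carne--Varopoulos, Gou\"ezel's local limit theorem, or Ancona inequalities --- is exactly the right direction, and it is essentially what the authors are pointing to when they write that the known arguments ``apply in the regime when a conjugacy class is fixed once and for all'' but that uniform bounds ``seem like a potentially more delicate problem''. Making any of those refinements work uniformly over $\mathfrak{c}$ would constitute a genuine advance, not a routine verification.

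One smaller issue: for input (iii) you assert exponential decay of $\bP_w[\tau(w_n)<\beta n]$. The Maher--Tiozzo theorem as quoted in the paper only gives that this probability tends to zero; the exponential rate requires separate large-deviation input (available for finitely supported measures on hyperbolic groups, but not contained in the statement the paper cites), so you should source it independently.
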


We think of both of these conjectures as an attempt to apply statements already known for individual conjugacy classes, to \textit{all} conjugacy classes at once and in a uniform way. Indeed, as mentioned above, the exponential growth rate of any individual conjugacy class is understood. Analogously, using work of Maher-Tiozzo outlined below in Section \ref{subsec: R}, one can show that the $\bP_{w}$ probability that $w_{n}$ lies in some \textit{fixed} conjugacy class decays exponentially fast with $n$. In fact, we hypothesize that Conjecture \ref{maximizing over conjugacy} holds in any hyperbolic group when the distribution $\mu$ is supported on some finite generating set.

\subsubsection{Criteria for large lifting degree} It is in general quite difficult to bound the simple lifting degree from below in terms of explicit combinatorial properties of a given curve $\gamma$. The aforementioned work of the second author \cite{Gaster16} represents one of the only results in this spirit. Loosely speaking, it says that if a curve $\gamma$ enters some annulus $A$ on the surface, spirals $k$ times around its core, and then exits $A$ over the same boundary component it crossed to enter, then $\deg(\gamma) \ge k$ (see Lemma \ref{lem:degree} in Section \ref{section: lifting} for a formal statement and proof). If $\gamma$ exhibits such behavior, we will call $A$ a $k$-\textit{spiraling annulus} for $\gamma$. 

One can use work of Sisto-Taylor \cite{ST19} to show that with $\bP_{w}$ probability converging to $1$, a given annulus $A$ can be  a $k$-spiralling annulus for $\gamma$ only if $k$ is at most on the order of $\log(n)$ (again, see Section \ref{section: lifting} for details). It therefore follows by Theorems \ref{thm: simple lifting degree for punctures} and \ref{thm: simple lifting degree for closed surfaces} that there are many curves whose lifting degree is large \textit{for some other reason besides the presence of spiraling annuli}.

This motivates the search for a deeper understanding of how to translate between lifting degree and combinatorics: 

\begin{question} Are there simple combinatorial properties exhibited by a generic curve $\gamma$ which force $\deg(\gamma)$ to be large?
\end{question}

\subsection{Outline of strategy and tools} \label{strategy}

To prove Theorems \ref{thm: compact} and \ref{thm: intersection}, we combine Teicm{\"u}ller theoretic arguments with probabilistic and counting results in the setting of negative curvature, due to Gekhtman-Taylor-Tiozzo \cite{GTT18} and Maher-Tiozzo \cite{MT18}. 

To get a feel for the strategy, we briefly outline the proof of Theorem \ref{thm: compact} for random walks. Let $X_{n} \in \cT(S)$ denote the metric minimizing the hyperbolic length of $\gamma_{n}$; our goal is to show that $X_{n}$ lies in some fixed neighborhood of $X_{\cA}$. Since $\gamma_{n}$ has length on the order of $n$ on the surface $X_{\cA}$, it follows that the sequence of geodesic currents given by $\left\{w_{n}/n \right\}$ is pre-compact and thus there exists a limiting current $c$.

We use Maher-Tiozzo \cite{MT18} to show that with high probability, $\gamma_{n}$ intersects any given simple closed curve at least linearly in $n$ many times. We use this to bound from below the infimal intersection number between $c$ and any simple closed curve, and in the process prove that $c$ must be a filling current. It then follows that $c$ is length minimized at a unique interior point of $\cT(S)$, which-- using basic properties of length functions and the intersection pairing-- precludes the minimizing metrics $X_{n}$ from getting further and further away from $X_{\cA}$. These arguments occur in Section \ref{section: minimizer}. 

With Theorem \ref{thm: compact}, we approach Theorem \ref{thm: intersection} as follows. Let $\ell_{n}$ denote the geodesic length of $\gamma_{n}$ on its length-minimizing surface $X_{n}$, and consider the weighted curve $\gamma_{n}/\ell_{n}$, viewed as a geodesic current on $X_{\gamma_{n}}$. Again, work of Bonahon \cite{Bon88} implies the existence of an accumulation point $c$ of the sequence $( \gamma_{n} / \ell_{n} )$. 

Since, by Theorem \ref{thm: compact}, the sequence of points $( X_{\gamma_{n}} )$ all lie in a compact subset of $\cT(S)$, the geodesic current $c$ must be length-minimized somewhere in the interior of $\cT(S)$. On the other hand, if $i(\gamma_{n}, \gamma_{n})$ grows sub-quadratically, one can show that the self-intersection number of $\gamma_{n}/\ell_{n}$ goes to $0$ as $n \rightarrow \infty$. It would then follow that $c$ is a geodesic lamination, and no geodesic lamination is length-minimized at an interior point of $\cT(S)$. If $S$ is any finite type surface,  work of Sapir \cite{S16} implies that if the self-intersection number is at most $k$, there must exist a simple closed curve intersecting $\gamma_{n}$ at most $\sqrt{k}$ times. These arguments can be found in Section \ref{section: intersection}. 

To prove Theorems \ref{thm: simple lifting degree for punctures} and \ref{thm: simple lifting degree for closed surfaces}, we use arguments that rely on counting simple closed geodesics on a hyperbolic surface up to a given length, in the spirit of Mirzakhani \cite{M08}-- we quickly summarize the idea for proving the genericity result for conjugacy classes.

Let $d =\deg(\gamma)$, let $\Sigma$ be the degree $d$ cover of $S$ such that $\gamma$ admits a simple elevation to $\Sigma$, and let $\tilde{\gamma}$ denote this elevation. Equip $S$ with a complete hyperbolic metric and let $\ell(\gamma)$ denote the geodesic length of $\gamma$ in this metric. Since $\gamma$ represents a word that lives in $B_{n}$, the Milnor-Svarc lemma implies that $\ell(\gamma)$ is $O(n)$. The chosen metric pulls back to $\Sigma$, and by basic covering space theory, $\tilde{\gamma}$ admits a representative of length at most $d \cdot \ell(\gamma)$. 

Thus, the number of conjugacy classes $[\gamma]$ in $C_{n}$ admitting an elevation to a cover of degree at most $d$ is bounded above by the number of conjugacy classes of degree $d$ covers of $S$, multiplied by the number of simple closed geodesics on any such cover with length at most $d \ell(\gamma)$. 

Using Dehn-Thurston coordinates in a careful way, we estimate the number of all such curves from above, and show that they grow sub-exponentially in $n$ under the assumption that $d$ grows slower than $n/\log(n)$. The details can be found in Section \ref{section: lifting}. Since our main results for $\bP_{w}$ in this section are stated only for standard generating sets, we conclude this section with a way to obtain lower bounds on $\deg(w_{n})$ that hold with high $\bP_{w}$ probability for any finite generating set $S$, and with any distribution whose support is $S$. However, these more general lower bounds are only on the order of $\log(n)$. 

Finally, we conclude the paper with a short proof of Theorem \ref{thm: point push} in Section \ref{section: point push}. The idea is simple: one uses either Maher-Tiozzo \cite{MT18} (in the setting of $\bP_{w}$) or Gekhtman-Taylor-Tiozzo \cite{GTT18} (in the setting of $\bP_{g}$) to argue that $\mbox{dil}(f_{\gamma_{n}})$ (or respectively $\mbox{dil}(f_{\gamma})$ for some $\gamma \in B_{n}$) grows exponentially in $n$ with high probability. On the other hand, Theorem \ref{thm: intersection} implies that $i(\gamma_{n}, \gamma_{n})$ grows quadratically in $n$, and so the theorem follows by bounding $\mbox{dil}(f_{\gamma_{n}})$ above and below by functions of $n$, and then expressing $n$ as a function of $i(\gamma_{n}, \gamma_{n})$.

\subsection{Acknowledgements.} The authors thank Ilya Gekhtman and Mark Hagen for guidance regarding the growth of conjugacy classes (and the growth of a single conjugacy class) relevant in Section \ref{section: lifting}. The authors also thank Samuel Taylor for many extremely helpful conversations. The first author was partially supported by NSF grant DMS 1939936. 

\section{Preliminaries} \label{section: prelim} 

\subsection{Curves and surfaces} \label{subsec: CS}

In what follows, $S$ will be an orientable surface of finite type. By $S_{g,p,b}$ we will mean the surface of genus $g$ with $p \geq 0$ punctures and $b \geq 0$ boundary components.

A \textit{closed curve} is a continuous function $\gamma: S^{1} \rightarrow S$. It is \textit{essential} if it is not homotopic to a constant map or into an arbitrarily small neighborhood of a puncture. We will use the notation $\sim$ for homotopy. We will sometimes conflate a curve with its image, or its entire homotopy class, when convenient. A curve is called \textit{simple} if it is an embedding. We will also sometimes refer to an entire homotopy class as simple when it admits a simple representative. 

Given a closed curve $\gamma: S^{1} \rightarrow S$ and a finite degree covering $p: \Sigma \rightarrow S$, an \textit{elevation} of $\gamma$ to $\Sigma$ is a closed curve $\tilde{\gamma}: S^{1} \rightarrow \Sigma$ such that there exists a covering map $\rho : S^{1} \rightarrow S^{1}$ with $p \circ \tilde{\gamma} = \gamma \circ \rho$. In other words, if we view $\gamma \in \pi_{1}(S, x)$ as an element of the fundamental group for some $x \in \mbox{Im}(\gamma)$, then $\tilde{\gamma} \in \pi_{1}(\Sigma, \tilde{x})$ is a path lift of some power of $\gamma$. 

Given curves $\alpha, \beta$, their \textit{geometric intersection number}, denoted $i(\alpha, \beta)$, is the minimum set-theoretic intersection taken over all representatives of the two homotopy classes: 

\[ i(\alpha, \beta) = \min_{\alpha' \sim \alpha, \beta' \sim \beta}|\alpha' \cap \beta'|. \]

Curves $\alpha, \beta$ are said to be in \textit{minimal position} if they achieve the geometric intersection number for their respective homotopy classes. Similarly, a single curve $\alpha$ is in \textit{minimal position} if it achieves the geometric self-intersection number $i(\alpha, \alpha)$. 

Given a collection of curves $\Gamma = \left\{\gamma_{1},..., \gamma_{n} \right\}$ in pairwise minimal position, $\Gamma$ is said to \textit{fill} $S$ if $S \setminus \Gamma$ is a union of disks or once-punctured disks. 

A \textit{pants decomposition} of $S$ is a collection $\mathcal{P}$ of essential and pairwise disjoint simple closed curves so that $S \setminus P$ is a disjoint union of pairs of pants (meaning a topological sphere such that the sum of boundary components and punctures is $3$). A standard Euler characteristic argument yields that the size of any pants decomposition is determined completely by the values of $g,p,b$. 

A \textit{multi-curve} is a finite collection of pairwise non-homotopic closed curves $\left\{\gamma_{1},..., \gamma_{n} \right\}$. A multi-curve is \textit{simple} if each component of it is simple and disjoint from all other components.

\subsection{Dehn-Thurston coordinates} \label{subsec: DT}

Fix a pants decomposition $\mathcal{P}= \left\{\gamma_{1},..., \gamma_{3g-3} \right\}$ of $S$ and let $\mathcal{S}(S)$ denote the set of homotopy classes of simple multi-curves on $S$. The \textit{Dehn-Thurston coordinates} is a parameterization of $\mathcal{S}(S)$ by coordinates in $\mathbb{Z}^{6g-6}$, described as follows (we follow the treatment of D. Thurston \cite{Thurston08} and Penner \cite{Penner}). 

The $i^{th}$ \textit{intersection number} $m_{i}$ of a given simple multi-curve $\alpha$ is simply the number of intersections between $\alpha$ and $\gamma_{i}$ when $\alpha$ is placed in minimal position with each $\gamma_{i}$. The  non-negative numbers $m_{1},..., m_{n}$ determine the intersection of $\alpha$ with each pair of pants in the complement of $\mathcal{P}$ up to isotopy. All that is left to do to recover $\alpha$ completely is to determine how neighboring pairs of pants glue together. For this, we need the \textit{twist numbers}, described informally as follows. 

Fix a ``doubling'' of each $\gamma_{i}$-- a choice of a parallel copy $\overline{\gamma_{i}}$ of $\gamma_{i}$. Each pair $\left\{\gamma_{i}, \overline{\gamma_{i}} \right\}$ bound an annulus $\mathcal{A}_{i}$. Letting $P$ be one of the pairs of pants in the complement of $\mathcal{P}$, let $P'$ be the complement of the intersection of $\bigcup_{i} \mathcal{A}_{i}$ with $P$. A basic lemma attributed to Dehn and Thurston (see for instance \cite{Penner}) implies that there is a way to isotope a given simple multi-curve $\alpha$ so that it intersects each $P'$ in a ``standard form'' that is determined by the isotopy class of the arc system $\alpha \cap P$. 

By choosing carefully a reference arc in each $A_{i}$, one can then define the $i^{th}$ twisting number to be the signed intersection number between this reference arc and $\alpha$.  Intuitively, it measures the number of times (and the direction in which) $\alpha$ twists about each pants curve when passing over it.

\subsection{Hyperbolic geometry} \label{subsec: HG}

A \textit{hyperbolic metric} on $S$ is a complete, finite area Riemannian metric of constant sectional curvature $-1$. The surface $S$ equipped with such a metric will be called a \textit{hyperbolic surface}. 

It is a standard consequence of negative curvature that given a hyperbolic surface $S$, in every homotopy class of essential closed curves, there exists a unique length-minimizing representative. This representative is called the \textit{geodesic} in that homotopy class. Another standard consequence of negative curvature is that geodesics always realize the geometric intersection number. 
Given $\epsilon>0$, a hyperbolic surface is said to be $\epsilon$-\textit{thick} if every closed geodesic has length at least $\epsilon$. 

Given an essential curve $\gamma$, we can define its \textit{length function} 
\[ \ell_{\gamma}: \cT(S) \rightarrow \mathbb{R}, \]
which records the geodesic length of $\gamma$ at a point $X \in \cT(S)$. We indicate $L(\gamma)=\inf \ell_\gamma(X)$, the infimal geodesic length of $\gamma$ taken over all $X \in \cT(S)$. 

The \textit{collar lemma} states that any simple closed geodesic $\alpha$ on a hyperbolic surface has an embedded collar neighborhood whose width is inversely proportional to the length of $\alpha$ (see for instance \cite{Buser} for more details).

\begin{lemma} \label{lem: collar} Letting $\ell$ denote the geodesic length of a simple closed curve $\alpha$ on a hyperbolic surface $S$, there is an embedded collar neighborhood of $\alpha$ with width at least 
\[ \sinh^{-1}\left( \frac{1}{\sinh\left( \frac{\ell(\alpha)}{2} \right)} \right)~. \]

\end{lemma}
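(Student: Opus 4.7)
The plan is to pass to the universal cover $\mathbb{H}^2$ and reduce the claim to an estimate on the distance between distinct translates of a lift of $\alpha$, which is then settled by a standard hyperbolic trigonometric identity. Let $\tilde{\alpha} \subset \mathbb{H}^2$ be a lift of $\alpha$, stabilized by a hyperbolic isometry $A \in \pi_1(S)$ of translation length $\ell := \ell(\alpha)$, and write $\pi : \mathbb{H}^2 \to S$ for the universal covering. Since $\alpha$ is a simple closed geodesic, all geodesic lifts in $\pi^{-1}(\alpha)$ are pairwise disjoint. A standard unraveling shows that an open neighborhood of $\alpha$ of radius $w$ embeds in $S$ as an annulus if and only if for every $g \in \pi_1(S) \setminus \langle A \rangle$ we have $d_{\mathbb{H}^2}(\tilde{\alpha}, g\tilde{\alpha}) \geq 2w$. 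It therefore suffices to prove, for every such $g$,
\[ d_{\mathbb{H}^2}(\tilde{\alpha}, g\tilde{\alpha}) \;\geq\; 2 \sinh^{-1}\!\left(\frac{1}{\sinh(\ell/2)}\right). \]

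To establish this, fix such a $g$ and set $d := d_{\mathbb{H}^2}(\tilde{\alpha}, g\tilde{\alpha})$. Let $\sigma$ denote the common perpendicular from $\tilde{\alpha}$ at $p$ to $g\tilde{\alpha}$ at $q$. Translating by $A$, the segment $A(\sigma)$ is the common perpendicular from $\tilde{\alpha}$ at $A(p)$ to $A(g\tilde{\alpha})$ at $A(q)$. Since the centralizer of the hyperbolic isometry $A$ in $\pi_1(S)$ is precisely $\langle A \rangle$, the hypothesis $g \notin \langle A \rangle$ forces $A(g\tilde{\alpha}) \neq g\tilde{\alpha}$, and simplicity of $\alpha$ guarantees that these two lifts are disjoint geodesics. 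I would then exhibit a right-angled Saccheri quadrilateral in $\mathbb{H}^2$ whose base is the segment of $\tilde{\alpha}$ from $p$ to $A(p)$ (of length $\ell$), whose two lateral sides are $\sigma$ and $A(\sigma)$ (each perpendicular to the base, of length $d$), and whose summit is a geodesic arc from $q$ to $A(q)$. Applying the classical Saccheri identity $\sinh(c/2) = \cosh(d)\sinh(\ell/2)$, where $c$ denotes the length of the summit, together with the observation that the summit must itself have length bounded below (as it stretches between two disjoint lifts of $\alpha$), yields the inequality $\sinh(d/2)\sinh(\ell/2) \geq 1$, which rearranges to the desired bound.

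The principal obstacle is extracting the sharp trigonometric inequality from the correct auxiliary polygon; in particular, isolating the right quadrilateral and verifying it is non-degenerate and convex requires careful use of the simplicity hypothesis, which is precisely what ensures pairwise disjointness of lifts and hence positivity of all the relevant common perpendiculars. One must also confirm that passing from $d$ to $d/2$ in the final bound is justified by the fact that the summit curve is not itself a lift of $\alpha$, so the minimum distance between adjacent lifts is realized symmetrically. After this geometric setup is in place, the inequality follows from a direct application of standard hyperbolic trigonometry, in the spirit of Buser's treatment of the collar lemma.
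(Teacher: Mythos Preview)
First, note that the paper does not supply its own proof of this lemma; it is quoted as standard background with a reference to Buser. So the only question is whether your argument is correct on its own.

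Your reduction to the universal cover is fine, as is the claim that the collar of width $w$ embeds precisely when $d_{\mathbb H^2}(\tilde\alpha,g\tilde\alpha)\ge 2w$ for every $g\notin\langle A\rangle$. The Saccheri quadrilateral with base $[p,Ap]\subset\tilde\alpha$, legs $\sigma$ and $A\sigma$, and summit $[q,A(q)]$ is genuine, and the identity $\sinh(c/2)=\cosh(d)\sinh(\ell/2)$ holds as you state.

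The gap is the deduction of $\sinh(d/2)\sinh(\ell/2)\ge 1$. The only information your summit bound supplies is $c\ge d_{\min}$, since $q\in g\tilde\alpha$ and $A(q)\in Ag\tilde\alpha$ lie on distinct lifts. Taking $g$ to realize $d=d_{\min}$ and feeding $c\ge d$ into the Saccheri identity gives only
\[
\sinh(d/2)\ \le\ \sinh(c/2)\ =\ \cosh(d)\,\sinh(\ell/2)\,,
\]
which is an inequality in the \emph{wrong direction}: it is satisfied for all sufficiently small $d$ (indeed $\sinh(d/2)/\cosh(d)\to 0$ as $d\to 0$), so it cannot force any lower bound on $d$. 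Even on the range where it does exclude values of $d$, a direct computation shows the excluded interval stops strictly below $2\sinh^{-1}\!\big(1/\sinh(\ell/2)\big)$. The closing remarks about ``passing from $d$ to $d/2$'' and the summit ``not being a lift of $\alpha$'' do not address this and do not lead to the claimed inequality.

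The underlying reason the Saccheri quadrilateral fails here is that it uses only the translation along $\tilde\alpha$ and the single right angle at $q$; it throws away the fact that $g\tilde\alpha$ is itself the axis of a translation of length $\ell$. The sharp constant $\sinh(w)\sinh(\ell/2)=1$ comes instead from a Lambert quadrilateral (trirectangle) degenerating to an ideal vertex, or equivalently, as in Buser's treatment, from the right-angled pentagon/hexagon identities inside a pair of pants having $\alpha$ as a boundary component: one shows that the half-collars of the three boundary geodesics are pairwise disjoint inside the pants, and the limiting case of a cusped boundary yields exactly $\sinh(w)\sinh(\ell/2)=1$. If you want a self-contained argument along the lines you began, you must replace the Saccheri quadrilateral by one of these configurations that encodes the translation length on \emph{both} axes.
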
 

The collar lemma implies the existence of some universal constant $C>0$ such that if $\alpha, \beta$ are two closed geodesics on a hyperbolic surface $S$ with geodesic lengths less than $C$, they must be disjoint.

In section \ref{section: lifting}, we will need to consider hyperbolic surfaces with geodesic boundary and perhaps also with finitely many punctures. When a surface of the form $S_{g,p,0}$ is equipped with an arbitrary complete hyperbolic metric, a classical theorem of Bers states the existence of a pants decomposition $\left\{\gamma_{1},..., \gamma_{3g-3+p} \right\}$ satisfying 
\[ \ell(\gamma_{k}) \leq 4k \cdot \log \left( \frac{4\pi (2g-2+p)}{k} \right)~.\]
See \cite{Buser} for more details. 

This was generalized to surfaces equipped with arbitrary complete Riemannian metrics with area normalized to be $4\pi \left( g + \frac{p}{2} - 1\right)$ on closed surfaces (where $p$ now denotes the number of marked points) by Balacheff-Parlier-Sabourau \cite{BPS12}.  They show that on such a surface, there is a pants decomposition $\mathcal{P}$ such that the total length $\ell(P)$ satisfies 
\[ \ell(P) \leq C_{g} \cdot p \log(p+1), \]
where $C_{g}$ depends only on genus. 

In fact, they prove a more general result which applies to surfaces $S_{g,p,b}$ with potentially multiple boundary components and with arbitrary finite area. Following through the proof of their Theorem $6.10$, one finds the existence of a polynomial $\mathcal{F}(x,y)$ which is of degree $4$ in $x$ and degree $3$ in $y$ such that if $S$ is a hyperbolic surface with totally geodesic boundary all of whose boundary components have length at most $L$, there is a pants decomposition of total length at most $\mathcal{F}(|\chi(S)|, L)$. 

\begin{remark} \label{easier but worse} We remark that there is a more elementary way to construct a bounded length pants decomposition $\mathcal{P}$ on a hyperbolic surface $S$ with totally geodesic boundary, although the bounds one gets are not as strong as in \cite{BPS12}. We sketch this briefly:

First, add in any curve to $\mathcal{P}$ whose geodesic length is less than $C$. Cutting along all such curves yields a (possibly disconnected) hyperbolic surface $S'$ with boundary and which is $C$-thick. The original area of $S$ is bounded (in terms only of the topology of $S$) by the Gauss-Bonnet theorem, and so each component of $S'$ has uniformly bounded area. Thus the $C$-thick part of each component has diameter at most some $D$, bounded only in terms of the original topology of $S$. 

It follows that on each component $\Sigma$ of $S'$, there is a simple closed geodesic $\sigma$ of length at most $2(D+L)$, where $L$ is the maximum of $C$, the length of any original boundary component of $S$, and the length of the boundary of a collar neighborhood of a geodesic of length $C$. One simply starts at a boundary component of $\Sigma$ and picks the shortest essential arc with at least one endpoint there; complete this arc to a simple closed curve by concatenating it with arcs that run around pieces of the boundary. We then cut along $\sigma$, and repeat the argument. At each stage, the maximum length of a boundary component can (roughly) double, and the number of stages is bounded above linearly in terms of $|\chi(S)|$. As a result, some curves in $\mathcal{P}$ might have exponentially long length as a function of both $|\chi(S)|$ and $L$. 

For our purposes in Section \ref{section: lifting}, it will turn out that either this bound or the one from \cite{BPS12} will suffice. 

\end{remark}




\subsection{Teichm{\"u}ller space and the mapping class group} \label{subsec: TS}

The \textit{Teichm{\"u}ller space} of $S$, denoted $\cT(S)$, is a space of (equivalence classes of) pairs $(\phi, \sigma)$ where $\sigma$ is a surface homeomorphic to $S$ equipped with a hyperbolic metric, and $\phi: S \rightarrow \sigma$ is a homeomorphism. Two pairs $(\phi, \sigma)$ and $(\phi', \sigma')$ are equivalent when there is an isometry $j: \sigma \rightarrow \sigma'$ such that $j \circ \phi = \phi'$ up to homotopy. The first coordinate is called the \textit{marking} of $X$. 

The \textit{mapping class group}, denoted $\mathcal{MCG}(S)$, is the group of homotopy classes of orientation preserving homeomorphisms of $S$. Given a finite set of marked points or punctures $\mathfrak{p} \subset S$, we denote by $\mathcal{MCG}(S, \mathfrak{p})$ the group or orientation preserving homeomorphisms sending $\mathfrak{p}$ to itself, up to homotopy. The mapping class group acts on $\cT(S)$ by composition with the marking. When $S$ has boundary, mapping classes correspond to homotopy classes of orientation preserving homeomorphisms that pointwise fix each boundary component. 

Mapping classes fit into a dynamical trichotomy known as the \textit{Nielsen-Thurston classification} (see \cite{FM12} for details): $f \in \mathcal{MCG}(S)$ is either finite order (known as elliptic); infinite order but preserving of some simple multi-curve (reducible); or there exists a pair of transverse measured filling (singular) foliations and a real number $\lambda>1$ such that both foliations are preserved by $f$ and such that one of the transverse measures is multiplied by $\lambda$ and the other by $1/\lambda$ (known as pseudo-Anosov). The \textit{dilatation} of a pseudo-Anosov homeomorphism, denoted $\mbox{dil}(f)$, is the number $\lambda$ (when $f \in \mathcal{MCG}(S)$ is not pseudo-Anosov, we will define $\mbox{dil}(f)$ to be zero by convention).

When a closed surface $S$ comes equipped with a preferred marked point $p$, the group $\mathcal{MCG}(S, p)$ fits into a short exact sequence known as the Birman exact sequence coming from the map $\pi: (S,p) \rightarrow S$ corresponding to forgetting the significance of $p$: 

\[ 1 \rightarrow \ker \pi \rightarrow \mathcal{MCG}(S,p) \xrightarrow[]{\pi} \mathcal{MCG}(S) \rightarrow 1. \]

The kernel of $\pi$ is naturally identified with the surface group $\pi_{1}(S,p)$ and is called the \textit{point-pushing subgroup} of $\mathcal{MCG}(S,p)$. An element $\gamma \in \pi_{1}(S,p)$ gives rise to point-pushing homeomorphism $f_{\gamma}$ by ``pushing'' the marked point $p$ around the curve $\gamma$ and dragging the surface along with it (see \cite{FM12} for formal details). 

A classical result of Kra \cite{Kra81} states that $f_{\gamma}$ is pseudo-Anosov exactly when $f$ fills $S$.

The Teichm{\"u}ller space can be topologized in several equivalent ways. One such way is to use so-called \textit{Fenchel-Nielsen coordinates} which in some sense mirrors the Dehn-Thurston coordinates mentioned above. Fixing a pants decomposition $\mathcal{P}= \left\{\gamma_{1},..., \gamma_{n}\right\}$, the Fenchel-Nielsen coordinates of a point $X \in \cT(S)$ are given by 
\[ \left( \ell_{\gamma_{1}}(X),..., \ell_{\gamma_{n}}(X), \tau_{1}(X),..., \tau_{n}(X) \right), \]
where $\ell_{\gamma_{i}}(X)$ is the geodesic length of $\gamma_{i}$ on $X$, and $\tau_{i}(X)$ is a \textit{twisting parameter} which measures the extent to which a reference arc crossing from one pair of pants to another twists around the bounding curve when the surface is glued together. See for instance \cite{Buser} for the formal details. 

One can then topologize $\cT(S)$ such that the Fenchel-Nielsen coordinates give a homeomorphism to $\mathbb{R}^{2n}$. With respect to this topology, the mapping class group acts properly discontinuously and by homeomorphisms. The quotient is naturally identified with the moduli space $\mathfrak{M}_{S}$ of hyperbolic surfaces homeomorphic to $S$. 

Given $\epsilon >0$, the $\epsilon$-\textit{thick} part $\cT_{\epsilon}(S)$ of $\cT(S)$ is the set of all $\epsilon$-thick hyperbolic surfaces. The projection of $\cT_{\epsilon}(S)$ to $\mathfrak{M}_{S}$ is compact. 

The Teichm{\"u}ller space admits a natural metric called the \textit{Teichm{\"u}ller metric}, which we will denote by $d_{\cT}$. The formal details can be found for instance in \cite{FM12}; the essential idea is that $d_{T}(X,Y)$ is the $\log$ of the infimal dilatation of a quasi-conformal homeomorphism from $X$ to $Y$, isotopic to the identity (defined relative to the markings of $X$ and $Y$). 

The $\cT(S)$-\textit{translation length} of a mapping class $f$, denoted $\tau_{\cT}(f)$, is defined to be 
\[ \min_{X \in \cT(S)} d_{\cT}(X, f(X)). \]
In the event that $f$ is pseudo-Anosov, one has the relation 
\[ \log(\mbox{dil}(f)) = \tau_{\cT}(f). \]

In the context of his celebrated proof of the Nielsen Realization Theorem, Kerckhoff \cite{Ker83} shows that if $\Gamma$ is a filling collection of curves, there exists a unique point $X_{\Gamma}$ in $\cT(S)$ minimizing the combined geodesic length of the curves in $\Gamma$. Roughly speaking, this argument has two steps: First, because $\Gamma$ is filling, the length function $\ell_\Gamma$ is proper, so a minimum exists. Second, one computes that the second-variation of the length function $\ell_\Gamma$ along the twist flow of a simple closed curve $\alpha$ is a sum of cosines at intersection points $\Gamma\cap \alpha$ (which are nonempty because $\Gamma$ fills $S$). Because simple closed curves are dense in the space of measured laminations, one deduces that the length function $\ell_\Gamma$ is strictly convex along any earthquake path. Because any pair of points in $\cT(S)$ is joined by an earthquake path, the minimum of $\ell_\Gamma$ is unique. Similarly, if $G \hookrightarrow S$ is a graph on $S$ that fills $S$, in the sense that each complementary region is a disk or once punctured-disk, then Kerckhoff's argument again implies that there is a unique point $X_{G} \in \cT(S)$ minimizing the length of $G$.

\subsection{The curve complex} \label{subsec: CC}

The \textit{curve complex} of $S$, denoted $\mathcal{C}(S)$, is the simplicial complex whose vertices correspond to homotopy classes of essential simple closed curves on $S$ and whose $k$-simplices correspond to collections of $k+1$ (homotopy classes of) simple closed curves that can be realized pairwise disjointly on $S$. For our purposes, it will suffice to consider the $1$-skeleton of $\mathcal{C}(S)$, known as the \textit{curve graph} (and which, slightly abusing notation, we will also denote by $\mathcal{C}(S)$). 
The curve graph is made into a metric space by identifying each edge with a unit length segment. A celebrated result of Masur-Minsky states that the associated path metric $d_{\mathcal{C}}$ on $\mathcal{C}(S)$ is $\delta$-hyperbolic \cite{MM99}, meaning that there is $\delta>0$ so that the $\delta$-neighborhood of any two of the sides of a geodesic triangle contains the third. The mapping class group $\mathcal{MCG}(S)$ acts by simplicial automorphisms on $\mathcal{C}(S)$ by acting on the homotopy classes of vertices and then extending to the higher dimensional simplices. 

There is a coarsely well-defined projection map 
\[ \mbox{sys}: \cT(S) \rightarrow \mathcal{C}(S)\] 
from the Teichm{\"u}ller space to the curve complex: given $X \in \cT(S)$, $\mbox{sys}(X)$ is sent to the simple closed curve representing the \textit{systole} of $X$, the shortest closed geodesic. The map $\mbox{sys}$ is technically not well-defined since $X$ could have multiple systoles. However, no two systoles can intersect more than once and therefore the set of all systoles constitute a diameter at most $2$ subset of $\mathcal{C}(S)$. We can then define $\mbox{sys}$ as a set map, taking values in the power set of the vertices of $\mathcal{C}(S)$.

Masur-Minsky show that $\mbox{sys}$ is \textit{coarsely Lipschitz}: there is some $K>0$ depending only on the topology of $S$ so that 
\[ d_{\mathcal{C}}(\mbox{sys}(X), \mbox{sys}(Y)) \leq K \cdot d_{\cT}(X, Y) + K. \]

Furthermore, $\mbox{sys}$ is $\mathcal{MCG}(S)$-equivariant, in the sense that $\mbox{sys}(f(X))$ coincides with $f(\mbox{sys}(X))$ for any $f \in \mathcal{MCG}(S)$ (keeping in mind that technically both are subsets of $\mathcal{C}(S)$).

\subsection{Geodesic currents} \label{subsec: GC}

In this subsection, we assume that $S$ is closed in order to avoid technical difficulties in the theory that won't arise in the course of our arguments. Fix a point $X \in \cT(S)$; it determines an action of $\pi_{1}(S)$ on the universal cover of $X$ which is identified with $\mathbb{H}^{2}$. This action extends to the Gromov boundary $\partial_{\infty}\mathbb{H}^{2} \cong S^{1}$. The \textit{space of unoriented geodesics} of $\mathbb{H}^{2}$, denoted $\mathcal{G}(\mathbb{H}^{2})$ is identified with 
\[ ((S^{1} \times S^{1}) \setminus \Delta)/\mathbb{Z}_{2}, \]
where $\Delta$ is the diagonal and the $\mathbb{Z}_{2}$ action comes from the two choices of orientation for a given bi-infinite geodesic. 

The action of $\pi_{1}(S)$ on $\partial_{\infty}(\mathbb{H}^{2})$ coming from $X$ induces a natural action of $\pi_{1}(S)$ on $\mathcal{G}(\mathbb{H}^{2})$. A \textit{geodesic current} on $X$ is then a locally finite $\pi_{1}(S)$-invariant Radon measure on $\mathcal{G}(\mathbb{H}^{2})$. 

A key example of a geodesic current comes from a choice of a (weighted) closed curve $\lambda \cdot \gamma$, for $\lambda\in\bR^+$. Given such a weighted curve, there is the counting measure supported on the set of all lifts of $\gamma$ to $\mathbb{H}^{2}$, where we assign a subset of geodesics a measure of $\lambda \cdot n$ exactly when it contains $n$ lifts. Measured laminations are also geodesic currents.

Let $C(X)$ denote the set of all geodesic currents on $X$. Celebrated work of Bonahon \cite{Bon88} shows how to naturally topologize $C(X)$ in a way that doesn't depend on the underlying choice of metric or marking, and thus one can unambiguously speak of $C(S)$, the space of geodesic currents on $S$. Moreover, the geometric intersection number of closed curves extends to a bilinear, continuous intersection form $i:C(X)\times C(X)\to \bR^+$, invariant under the natural diagonal mapping class group action by homeomorphisms. 

Bonahon also shows that $\cT(S)$ embeds in $C(S)$ in such a way where its closure reproduces Thurston's compactification by projective measured laminations. This embedding can be defined using so-called \textit{Liouville currents}: for each $X \in \cT(S)$, there is a unique geodesic current $\mathcal{L}_{X}$ such that for any closed curve $\gamma$ on $X$, 
\[ \ell_{X}(\gamma) = i(\gamma, \mathcal{L}_{X}). \]
The embedding mentioned above sends $X$ to $\mathcal{L}_{X}$. 

A geodesic current $\mu\in C(S)$ is \textit{filling} provided $i(\mu,\alpha)>0$ for every simple closed curve $\alpha$.
Of key importance for us is the following compactness criteria of Bonahon \cite{Bon88}.  

\begin{theorem} \label{compact currents} Let $c \in C(S)$ be a filling current and fix $R >0$. Then 
\[ N(c,R) = \left\{ \gamma \in C(S) : i(\gamma, c) \leq R \right\} \]
is compact in $C(S)$. 

\end{theorem}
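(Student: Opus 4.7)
The proof plan is a comparison argument: bound $i(\cdot,c)$ from below by a proper function on $C(S)$. Fix an auxiliary $X\in \mathcal{T}(S)$ with Liouville current $\mathcal{L}_X$, and consider $\ell_X(\mu):= i(\mu,\mathcal{L}_X)$, the continuous extension of the hyperbolic length function from weighted curves to all of $C(S)$. I would first establish the base-case length-properness: for each $T>0$, the sublevel set $\{\mu\in C(S): \ell_X(\mu)\le T\}$ is compact. Since $C(S)$ is the space of $\pi_1(S)$-invariant Radon measures on $\mathcal{G}(\mathbb{H}^2)$ in the weak-$*$ topology, the Banach--Alaoglu theorem reduces this to a uniform local estimate $\mu(K)\le C_K\cdot \ell_X(\mu)$ for every compact $K\subset \mathcal{G}(\mathbb{H}^2)/\pi_1(S)$. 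Such an estimate follows by covering $K$ with finitely many ``geodesic boxes'' of lifts crossing short transverse arcs in a fundamental domain for $X$; each such box carries positive finite Liouville mass, so $\ell_X$ dominates $\mu$ on it.

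With length-properness in hand, the theorem reduces to the linear comparison
\[
\ell_X(\mu) \le D\cdot i(\mu,c) \quad \text{for all } \mu\in C(S),
\]
for some $D = D(c,X) > 0$. Granting this, $N(c,R)\subseteq \ell_X^{-1}([0,DR])$ is closed (by continuity of $i(\cdot,c)$) inside a compact sublevel set, hence compact. To produce the comparison constant I would projectivize. The ``unit sphere'' $\mathcal{U} := \{\mu\in C(S): \ell_X(\mu) = 1\}$ is compact by the previous step, and the continuous function $\mu\mapsto i(\mu,c)$ attains an infimum $m\ge 0$ on it; any $D > 1/m$ then suffices, provided $m>0$.

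I expect the hard part to be verifying this strict positivity. The filling hypothesis as formulated gives $i(\alpha,c)>0$ only for simple closed curves $\alpha$, whereas a minimizer $\mu_0\in \mathcal{U}$ is typically not a weighted curve. To close the gap I would argue that any non-zero $\mu_0\in C(S)$ with $i(\mu_0,c) = 0$ must be a measured lamination whose leaves never cross any leaf in $\operatorname{supp}(c)$, so that $\operatorname{supp}(\mu_0)$ is confined to a subsurface $\Sigma_0$ avoiding $\operatorname{supp}(c)$. Invoking Thurston's density of weighted simple closed curves in $\mathcal{ML}(S)$ applied within $\Sigma_0$, one can approximate $\mu_0$ by $\alpha_n/L_n\to \mu_0$ and choose a simple closed curve $\beta$ meeting $\operatorname{supp}(\mu_0)$ transversely; then bilinearity of $i$ together with filling applied to $\beta$ (and to the $\alpha_n$ in $\Sigma_0$) would contradict $i(\mu_0,c) = 0$. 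This step essentially upgrades the filling hypothesis from simple closed curves to arbitrary non-zero currents and parallels Bonahon's structural analysis of the zero locus of the intersection form; it is the delicate part of the argument, while the rest is a standard weak-$*$ compactness and projectivization package.
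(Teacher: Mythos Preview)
The paper does not prove this statement; it is quoted as a compactness criterion of Bonahon \cite{Bon88} and used as a black box. So there is no ``paper's proof'' to compare against, and your outline is in fact essentially Bonahon's original strategy: reduce to properness of $\ell_X$ via weak-$*$ compactness, then compare $i(\cdot,c)$ to $\ell_X$ by projectivizing onto the compact sphere $\{\ell_X=1\}$ and showing the infimum of $i(\cdot,c)$ there is strictly positive. That overall architecture is correct.

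The one place your sketch is genuinely loose is the ``hard part'' you flag. Two steps there are not justified as written. First, from $i(\mu_0,c)=0$ you conclude that $\mu_0$ is a measured lamination; but $i(\mu_0,c)=0$ only says that no leaf of $\operatorname{supp}(\mu_0)$ transversely crosses a leaf of $\operatorname{supp}(c)$, and there is no a priori reason the leaves of $\mu_0$ cannot cross each other. Second, you assert that $\operatorname{supp}(\mu_0)$ is confined to an essential subsurface $\Sigma_0$ disjoint from $\operatorname{supp}(c)$; when $c$ is filling one expects precisely that no such $\Sigma_0$ exists, so this step is pointing in the wrong direction.

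A cleaner way to close the gap is to work with a single leaf. Pick any geodesic $g\in\operatorname{supp}(\mu_0)$; its closure in $S$ contains a minimal sublamination $\lambda$. If $\lambda$ is a simple closed curve $\alpha$, then $i(\alpha,c)>0$ by hypothesis, so leaves of $\operatorname{supp}(c)$ cross $\alpha$ and hence cross nearby leaves of $\operatorname{supp}(\mu_0)$, contradicting $i(\mu_0,c)=0$. If $\lambda$ is not a simple closed curve, a boundary leaf of a complementary region of $\lambda$ is isolated on one side, and one can find a simple closed curve $\alpha$ in that region with $i(\alpha,c)>0$; a leaf of $\operatorname{supp}(c)$ crossing $\alpha$ must then also cross $\lambda\subset\operatorname{supp}(\mu_0)$, again a contradiction. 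This is the argument you should substitute for the $\Sigma_0$ step; with it in place your proof is complete and matches Bonahon's.
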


\subsection{Counting and random walks in hyperbolic spaces} \label{subsec: R}

Let $G$ be a countable group and $\mu$ a probability distribution on $G$. We can then consider the \textit{random walk driven by} $\mu$, obtained by taking products of the form 
\[ w_{n} = g_{1}g_{2}...g_{n} \]
where $g_{i} \in G$ are independent and identically distributed with distribution $\mu$. 

Let $(X,x_{0})$ be a pointed separable $\delta$-hyperbolic space, and suppose $G$ acts by isometries on $X$. The orbit map $g \mapsto g \cdot x_{0}$ gives us a way of pushing forward a random walk on $G$ to a sequence $(w_{1}x_{0}, w_{2}x_{0},...)$ in $X$ called a \textit{sample path}. See for instance \cite{MT18} for more details. We let $\bP^{(n)}_{w}$ denote the probability induced by the $n$-fold convolution $\mu^{\ast n}$ on the space of sample paths of size $n$. For convenience and since the argument of $\bP^{(n)}_{w}$ will always be in terms of some variable that is indexed by $n$, we will often omit the superscript and will simply write $\bP_{w}$. 

The distribution $\mu$ is called \textit{non-elementary} if the subgroup of $G$ generated by its support contains a pair of loxodromic isometries with disjoint fixed points on the Gromov boundary $\partial_{\infty}X$. Maher-Tiozzo \cite{MT18} describe very detailed structural properties of sample paths in this context: 

\begin{theorem} \label{thm:MT} Let $G$ be a countable group of isometries of a separable $\delta$-hyperbolic space $(X, x_{0})$ and let $\mu$ be a non-elementary probability distribution on $G$. Then there is $L>0$ so that  
\[ \bP_{w}\left[ d_{X}(x_{0}, w_{n}x_{0}) \leq L \cdot n \right] \xrightarrow[\substack{n\to\infty}]{} 0. \]

Moreover, letting $\tau_{X}(w_{n})$ denote the translation length of $w_{n}$ on $X$, 
\[ \bP_{w}\left[ \tau_{X}(w_{n}) \leq L \cdot n \right] \xrightarrow[\substack{n\to\infty}]{} 0. \]
\end{theorem}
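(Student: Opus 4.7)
The plan is to establish the first (displacement) statement, and then deduce the second (translation length) statement as a consequence. The random variables $f_n(\omega) := d_X(x_0, w_n(\omega) \cdot x_0)$ are subadditive along the shift: by $G$-equivariance of the metric and the triangle inequality,
\[ f_{n+m}(\omega) \le f_n(\omega) + d_X(w_n x_0, w_{n+m} x_0) = f_n(\omega) + f_m(\sigma^n \omega). \]
Assuming a mild integrability condition on $\mu$ (verified by a truncation argument if needed), Kingman's subadditive ergodic theorem produces a deterministic constant $L_0 \ge 0$ with $f_n/n \to L_0$ almost surely. It then suffices to show $L_0 > 0$.

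To prove positivity of drift, the core input is non-elementarity. First I would establish almost-sure convergence of the sample path $(w_n x_0)$ to a point $\xi(\omega) \in \partial X$ in the Gromov boundary. This is a routine consequence of the existence of two loxodromic isometries with four disjoint boundary fixed points in the semigroup generated by $\mathrm{supp}(\mu^{\ast N})$, combined with a ping-pong argument and Borel--Cantelli. The distribution of $\xi(\omega)$ defines a $\mu$-stationary harmonic measure $\nu$ on $\partial X$, and non-elementarity forces $\nu$ to be non-atomic and concentrated on the limit set. Positivity of $L_0$ is then extracted from a shadowing / recurrence argument: shadows of boundary points have $\nu$-mass tending to zero as the shadowing parameter grows, so the walk cannot linger with positive upper density in any bounded ball around $x_0$. (Equivalently, one invokes Kaimanovich's entropy criterion: non-atomicity of $\nu$ gives positive asymptotic entropy, hence positive drift.)

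For the translation-length statement, use the standard coarse formula in $\delta$-hyperbolic spaces,
\[ \tau_X(g) \ge d_X(x_0, g x_0) - 2 \, (g^{-1} x_0, \, g x_0)_{x_0} - O(\delta), \]
where $(\cdot, \cdot)_{x_0}$ denotes the Gromov product based at $x_0$. The forward walk $(w_n x_0)$ and the backward walk $(w_n^{-1} x_0)$ are driven by $\mu$ and its reflection $\check\mu$ respectively; both are non-elementary, and by the argument above they converge almost surely to boundary points with non-atomic harmonic measures. Since these two limiting measures have no common atom, the two limits are distinct a.s., and thin-triangle hyperbolicity then forces $(w_n^{-1} x_0, w_n x_0)_{x_0}$ to be bounded in probability. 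Combined with $d_X(x_0, w_n x_0) \ge L_0 n$ with high probability, this yields $\tau_X(w_n) \ge (L_0 - o(1))\, n$ with probability tending to $1$; any $L$ slightly smaller than $L_0$ then works for both statements.

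The principal obstacle is positivity of drift ($L_0 > 0$), which really does require non-atomicity of the harmonic measure and a genuinely non-trivial argument — this is the technical heart of Maher--Tiozzo and occupies substantial portions of their paper. Everything downstream (convergence to the boundary, time-reversal symmetry of the drift, and the coarse formula bounding $\tau_X$ below by displacement minus Gromov-product backtracking) is then relatively mechanical given the hyperbolicity of $X$.
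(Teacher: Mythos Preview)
The paper does not prove this statement; it is quoted verbatim from Maher--Tiozzo \cite{MT18} as a black-box input, so there is no in-paper argument to compare your proposal against.

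As a sketch of the Maher--Tiozzo proof itself, your outline is broadly faithful: subadditive ergodic theorem for the displacement, boundary convergence and non-atomicity of the hitting measure to force positive drift, and the Gromov-product formula together with independence of the forward and backward limits to upgrade displacement to translation length. One genuine gap: invoking Kingman requires $\mathbb{E}\,d_X(x_0,g x_0)<\infty$ for $g\sim\mu$, i.e.\ a finite first moment, which the theorem as stated does not assume; your ``truncation argument if needed'' is not a real fix, since truncating the step distribution can destroy non-elementarity and in any case does not obviously recover the conclusion for the original walk. Maher--Tiozzo handle the moment-free case by a direct argument from decay of shadows, bypassing Kingman. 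That said, every use of this theorem in the present paper has $\mu$ finitely supported on a generating set, so the integrability issue never arises in the applications.
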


In fact, in this context, the progress made from the identity has an almost surely defined limiting behavior (see for instance Theorem $8.14$ of \cite{Woess02}): 

\begin{theorem} \label{thm: limit of progress exists} In the context of Theorem \ref{thm:MT}, there is $m>0$ so that 
\[ \lim_{n \rightarrow \infty} \frac{d_{X}(x_{0}, w_{n}x_{0})}{n} = m, \hspace{2 mm} \bP_{w} - \mbox{almost surely}. \]

\end{theorem}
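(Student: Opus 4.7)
The plan is to deduce the existence of the almost sure drift from Kingman's subadditive ergodic theorem applied to the displacement cocycle, and then to deduce positivity from Theorem \ref{thm:MT}.

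First I would set up the cocycle. Let $\Omega = G^{\mathbb{N}}$ be the space of sample paths equipped with the product measure $\bP_w = \mu^{\otimes \mathbb{N}}$, and let $T : \Omega \to \Omega$ be the Bernoulli shift $T(g_1, g_2, \ldots) = (g_2, g_3, \ldots)$, which is measure-preserving and ergodic. Define the real-valued random variables
\[ a_n(\omega) := d_{X}(x_{0}, w_n(\omega)\, x_{0}), \quad \omega = (g_1, g_2, \ldots). \]
Using the factorization $w_{n+m} = w_n \cdot (g_{n+1}\cdots g_{n+m})$, the triangle inequality, and the fact that $G$ acts by isometries, one obtains
\[ a_{n+m}(\omega) \leq d_X(x_0, w_n x_0) + d_X(w_n x_0, w_{n+m} x_0) = a_n(\omega) + a_m(T^n \omega), \]
which is exactly the subadditivity hypothesis of Kingman's theorem.

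Next I would verify integrability. Because $\mu$ is supported on the finite generating set $\cA$, the step displacement
\[ a_1(\omega) = d_X(x_0, g_1 x_0) \leq \max_{g \in \mathrm{supp}(\mu)} d_X(x_0, g x_0) \]
is bounded, hence in $L^1(\bP_w)$, and by subadditivity each $a_n$ is in $L^1$ as well. Kingman's subadditive ergodic theorem then yields a deterministic constant
\[ m := \inf_{n \geq 1} \frac{\mathbb{E}_{\bP_w}[a_n]}{n} \geq 0 \]
such that $a_n/n \to m$ both $\bP_w$-almost surely and in $L^1$.

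The remaining step, which I would identify as the main obstacle of this proof, is to rule out $m = 0$; but Theorem \ref{thm:MT} takes care of this. Indeed, almost sure convergence implies convergence in probability, so if we had $m = 0$ then for every $L > 0$ we would obtain $\bP_w[a_n \leq L n ] \to 1$, contradicting the statement that $\bP_w[d_X(x_0, w_n x_0) \leq L n] \to 0$ for the constant $L$ produced by Theorem \ref{thm:MT}. Therefore $m > 0$, completing the argument.
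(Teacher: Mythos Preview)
Your argument is correct and is precisely the standard Kingman route; there is no gap. Note, however, that the paper does not give its own proof of this statement: it is quoted as a known result, with a reference to Theorem~8.14 of Woess. The argument you have written is essentially the one found there, so you are supplying the proof the paper chose to cite rather than offering an alternative.

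One small remark: in verifying integrability you invoke that $\mu$ is supported on the finite set $\cA$, whereas the hypotheses of Theorem~\ref{thm:MT} as stated in the paper do not include any moment condition on $\mu$. The almost-sure drift result genuinely needs a first-moment hypothesis (or finite support), so strictly speaking the statement ``in the context of Theorem~\ref{thm:MT}'' is slightly underdetermined. Your assumption of finite support is exactly what holds throughout the paper's applications, so this is a harmless tightening rather than an error.
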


Assume next that $G$ itself is Gromov hyperbolic and fix a finite generating set $\mathcal{S}$ of $G$. Let $B_{n}$ denote the ball of radius $n$ about the identity in the Cayley graph $\Gamma_{\mathcal{S}}$ associated to $\mathcal{S}$. We can then consider the probability operator $\bP^{(n)}_{g}$ coming from the uniform distribution on $B_{n}$. Again, we will sometimes drop the super-script $n$ and simply write $\bP_{g}$, but we'll make this abbreviation slightly less often than with $\bP_{w}$ since the argument of $\bP^{(n)}_{g}$ needn't involve a term that calls on $n$. 

In this setting, Gekhtman-Taylor-Tiozzo \cite{GTT18} show the analog of Theorem \ref{thm:MT}: 

\begin{theorem} \label{thm: GTT} Let $G$ be a hyperbolic group with a non-elementary action on a separable pointed hyperbolic space $(X, x_{0})$. Then the proportion of elements in $B_{n}$ that act loxodromically on $X$ goes to $1$:

\[ \frac{\#\left\{h \in B_{n}: h \hspace{2 mm} \mbox{acts loxodromically on} \hspace{2 mm} X \right\}}{\#B_{n}} = \bP^{(n)}_{g}\left[h \in G: h \hspace{2 mm} \mbox{acts loxodromically on} \hspace{2 mm} X\right] \xrightarrow[\substack{n\to\infty}]{} 1. \] 

Furthermore, a generically chosen element $h$ in $B_{n}$ moves $x$ linearly far as a function of its word length $|h|$, and in fact its stable translation length is also bounded below by a linear function of word length: there is some $L> 0$ so that 

\[ \bP^{(n)}_{g}\left[ h: d_{X}(x_{0}, h \cdot x_{0}) \geq L \cdot |h| \right] \xrightarrow[\substack{n\to\infty}]{} 1; \]
\[ \bP^{(n)}_{g}\left[h : \tau_{X}(h) \geq L \cdot |h|\right] \xrightarrow[\substack{n\to\infty}]{} 1. \]

\end{theorem}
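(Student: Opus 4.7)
The plan is to combine the automatic structure of the hyperbolic group $G$ with the dynamical properties of the action on $(X,x_0)$. First, I would invoke Cannon's theorem to obtain a finite-state automaton that accepts a unique geodesic representative in $\mathcal{S}^*$ for each element of $G$. This identifies $B_n$ with the set of admissible paths of length at most $n$ in a finite directed graph $\Gamma$, and realizes the growth series of $G$ as a rational function. Standard Perron-Frobenius theory applied to the transition matrix of (the recurrent part of) $\Gamma$ then produces a natural shift-invariant probability measure $\nu$ on the space $\Sigma$ of bi-infinite admissible paths, with the property that the uniform measures on $B_n$, pushed to $\Sigma$ by the automatic normal forms, equidistribute to $\nu$.

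Next, I would set up the cocycle $c_n(\omega) := d_X(x_0, w_n(\omega) \cdot x_0)$ on $\Sigma$, where $w_n(\omega)$ is the prefix of length $n$ of $\omega$. Since $c_n$ is subadditive under concatenation, Kingman's subadditive ergodic theorem produces an almost-sure limit $\lim_n c_n/n = L \geq 0$ with respect to $\nu$. The crucial step is to show $L > 0$: here I would invoke non-elementarity, which gives two loxodromic elements $a,b$ with disjoint boundary fixed points on $\partial_\infty X$. A ping-pong argument produces a free quasi-isometrically embedded subgroup $F_2 = \langle a^N, b^N\rangle \hookrightarrow G$, and an automatic/combinatorial counting argument shows that the normal forms of generic elements of $G$ contain translates of these ping-pong generators with positive frequency. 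This forces $c_n$ to grow linearly on a set of $\nu$-full measure, and after transferring back to $B_n$ via equidistribution, yields the second displacement estimate with probability tending to one.

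To promote the displacement bound to the translation length statement, I would use the Morse-type stability of generic normal forms. Because $c_n \asymp n$ on a generic set, the finite orbit segment $\{x_0, w \cdot x_0, w^2 \cdot x_0, \ldots\}$ of a generic $w \in B_n$ traces out an unparameterized quasi-geodesic in $X$ whose constants do not depend on $|w|$. The Morse lemma in the $\delta$-hyperbolic space $X$ then implies that $w$ acts loxodromically, with stable translation length $\tau_X(w)$ comparable to $d_X(x_0, w \cdot x_0)$; this simultaneously establishes the first (loxodromicity) claim and upgrades the displacement bound to the third (translation length) claim.

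The main obstacle I anticipate is the equidistribution step: making precise the sense in which uniform counting on $B_n$ pushes forward, under the automatic structure, to the Perron-Frobenius measure on $\Sigma$, and controlling the error terms uniformly in $n$. A related subtle point is handling the non-recurrent (transient) components of Cannon's automaton, which can overcount or undercount elements; one must argue that the contribution from the recurrent components dominates up to exponentially small error, which is where the non-elementarity of the action on $X$ really gets used to rule out the pathological case in which a dominant recurrent component carries only elements that are elliptic or parabolic on $X$.
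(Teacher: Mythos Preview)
This theorem is not proved in the paper at all: it is stated in the preliminaries section as a result of Gekhtman--Taylor--Tiozzo \cite{GTT18} and used as a black box throughout. So there is no ``paper's own proof'' to compare your proposal against.

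That said, your sketch is a reasonable outline of how the GTT theorem is actually proved in the original source: the Cannon automatic structure, the associated Markov/Perron--Frobenius measure on the space of normal forms, and the subadditive ergodic theorem for the displacement cocycle are indeed the backbone of their argument. The upgrade from displacement to stable translation length via fellow-traveling of iterates is also the right idea. The obstacle you flag (equidistribution of uniform measures on $B_n$ toward the stationary measure on the shift, and handling transient components of the automaton) is exactly where the technical work lies in \cite{GTT18}, and it is nontrivial; your sketch correctly identifies it but does not resolve it. In particular, the step ``an automatic/combinatorial counting argument shows that the normal forms of generic elements of $G$ contain translates of these ping-pong generators with positive frequency'' hides real content and is not quite how positivity of the drift is established in the reference.

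For the purposes of this paper, however, none of that matters: you should simply cite the result rather than attempt to reprove it.
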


\begin{remark} \label{stable} Gekhtman-Taylor-Tiozzo state their results in terms of \textit{stable} translation length, which is defined as 
\[ \liminf_{n \rightarrow \infty} \frac{d_{X}(x_{0}, g^n \cdot x_{0})}{n}. \]
However, it's easy to see that stable translation length is bounded above by translation length.
\end{remark}

\begin{remark} \label{rem: large length} In a surface group with a standard generating set, it is easy to see that the proportion of elements in $B_{n}$ whose word length is at least $n/2$ goes to $1$ as $n \rightarrow \infty$. It follows that when $G$ is a surface equipped with a standard generating set, one gets a linear lower bound-- in terms of $n$-- on both progress away from $x_{0}$ and  $X$-translation length for generic elements in $B_{n}$.
\end{remark}

By the phrase ``with high probability'', we will mean ``with either $\bP_{w}$ or $\bP^{(n)}_{g}$ (depending on context) going to $1$ as $n \rightarrow \infty$''. 

\section{Length minimizers} \label{section: minimizer}

For concision, we will state arguments in a way that applies to both $\bP_{g}$ and $\bP_{w}$ whenever possible. 
First, we verify that $X(\gamma_n)$ exists with high probability: 
\begin{lemma}
\label{lem:minimizer exists}
We have
\[
\bP_{w} \big[ \exists ! X\in \cT(S) \text{ such that } \ell_{X}(\gamma_{n}) = L(\gamma_n) \big] \xrightarrow[\substack{n\to\infty}]{} 1, 
\]
and similarly, 
\[ \bP^{(n)}_{g} \big[ \gamma \in B_{n}: \exists ! X \in \cT(S) \text{ such that } \ell_{X}(\gamma) = L(\gamma) \big] \xrightarrow[\substack{n\to\infty}]{} 1. \]
\end{lemma}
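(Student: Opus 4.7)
The plan is to reduce the statement to showing that, with high probability, $\gamma_n$ is a filling closed curve on $S$. For a filling $\gamma$, the length function $\ell_\gamma : \cT(S) \to \bR$ is proper --- by the collar lemma, pinching any simple closed curve $\alpha$ forces $\ell_\gamma$ to blow up, since $i(\gamma, \alpha) > 0$ --- and strictly convex along every earthquake path by the Kerckhoff computation recalled in Subsection \ref{subsec: TS} (since its hypothesis $i(\gamma, \alpha) > 0$ holds for every simple $\alpha$), so a unique minimizer exists. Conversely, if some simple closed curve $\alpha$ is disjoint from $\gamma$, the earthquake flow along $\alpha$ preserves $\ell_\gamma$ while moving the underlying point in $\cT(S)$, ruling out uniqueness. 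The lemma is therefore equivalent to the claim that $\gamma_n$ fills $S$ with high probability under both $\bP_w$ and $\bP_g^{(n)}$.

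To verify filling, fix a basepoint $p \in S$ and set $S^\circ := S \setminus \{p\}$. The Birman exact sequence embeds $\pi_1(S, p)$ into $\mathcal{MCG}(S^\circ)$ via the point-pushing map $\gamma \mapsto f_\gamma$. By Kra's theorem (Subsection \ref{subsec: TS}), $\gamma$ fills $S$ if and only if $f_\gamma$ is pseudo-Anosov, equivalently $f_\gamma$ acts with positive translation length on the Gromov-hyperbolic curve complex $\mathcal{C}(S^\circ)$. The induced action of $\pi_1(S)$ on $\mathcal{C}(S^\circ)$ is non-elementary: $\pi_1(S)$ contains non-commensurable filling conjugacy classes, whose point-pushings are pseudo-Anosovs with distinct invariant laminations and hence with disjoint pairs of fixed points in $\partial_\infty \mathcal{C}(S^\circ)$. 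Since the support $\cA$ of $\mu$ generates all of $\pi_1(S)$, this non-elementary property transfers to the induced walk on $\mathcal{C}(S^\circ)$.

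We then apply the probabilistic inputs from Subsection \ref{subsec: R}. Theorem \ref{thm:MT} of Maher-Tiozzo supplies $L > 0$ such that $\bP_w\bigl[\tau_{\mathcal{C}(S^\circ)}(f_{\gamma_n}) \geq L n\bigr] \to 1$; in particular, with high $\bP_w$-probability $f_{\gamma_n}$ is pseudo-Anosov and $\gamma_n$ fills $S$. The analogous $\bP_g^{(n)}$ statement follows from Theorem \ref{thm: GTT} of Gekhtman-Taylor-Tiozzo --- applicable because $\pi_1(S)$ is a Gromov hyperbolic group acting non-elementarily on $\mathcal{C}(S^\circ)$ --- together with Remark \ref{rem: large length}, which ensures that a generic element of $B_n$ has word length comparable to $n$. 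The main obstacle I anticipate is the careful verification that the induced action of $\pi_1(S)$ on $\mathcal{C}(S^\circ)$ is non-elementary in the precise sense required by Theorems \ref{thm:MT} and \ref{thm: GTT}; once this is in place, the proof becomes a bookkeeping assembly of Kra's theorem, Kerckhoff's convexity and properness, the collar lemma, and the two cited probabilistic results.
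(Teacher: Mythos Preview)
Your proposal is correct and follows essentially the same route as the paper: reduce to showing $\gamma_n$ fills, translate filling to pseudo-Anosov via Kra, and invoke Maher--Tiozzo (resp.\ Gekhtman--Taylor--Tiozzo) for the translation-length lower bound on $\mathcal{C}(S^\circ)$. You are slightly more explicit than the paper about why filling implies a unique minimizer (properness plus Kerckhoff convexity) and about verifying non-elementarity, but the argument is the same.
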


\begin{proof}
First and foremost, choose a basepoint $p \in S$. Then for $\bP_{w}$, let $w_{n} \in \pi_{1}(S, p)$ denote the location of the walk after $n$ steps, and consider the point-pushing map $f_{\gamma_{n}}$.

Since the translation length of an element $f \in \mathcal{MCG}(S,p)$ acting on the curve complex $\mathcal{C}(S)$ is larger than $2$ only if $f$ is pseudo-Anosov, it follows by Theorem \ref{thm:MT} that with high probability, $f_{w_{n}}$ is pseudo-Anosov. By Kra's theorem mentioned in Section \ref{subsec: TS}, $\gamma_{n}$ fills $S$ with high $\bP_{w}$ probability. Finally, by Kerckhoff's work mentioned in Section \ref{subsec: HG}, we are done because filling curves are length minimized at a unique point in $\cT(S)$. 

For $\bP_{g}$, the proof is the same, except instead of Theorem \ref{thm:MT}, we use Theorem \ref{thm: GTT} to argue that with high $\bP_{g}$ probability, the point-pushing map with defining curve $\gamma \in B_{n}$ is pseudo-Anosov, and therefore the homotopy class associated to the conjugacy class of $\gamma$ fills $S$. 
\end{proof}

Henceforth, we denote by $X(\gamma) \in \cT (S)$ the point which uniquely minimizes the length of $\gamma$, if it exists.

Let $\alpha$ be an arbitrary essential simple closed curve on $S$. The goal of the next lemma is to quantify the intersection between $\alpha$ and $\gamma_n$, or between $\alpha$ and a generically chosen $\gamma \in B_{n}$.

\begin{lemma} \label{linear intersection}
There exists $C > 0$ so that for any essential simple closed curve $\alpha$,
\[ \bP_{w} \left[ \frac{i(\gamma_n , \alpha)}{n} > C \right] \xrightarrow[\substack{n\to\infty}]{} 1\]

Similarly, 
\[ \bP^{(n)}_{g} \left[ \gamma \in B_{n}: \frac{i(\gamma, \alpha)}{n}> C \right] \xrightarrow[\substack{n\to\infty}]{} 1~. \]

\end{lemma}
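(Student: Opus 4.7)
The plan is to invoke Bass--Serre theory for the splitting of $\pi_1(S)$ induced by $\alpha$. Since $\alpha$ is essential and simple, $\pi_1(S)$ decomposes as either an HNN extension (when $\alpha$ is non-separating) or an amalgamated free product (when $\alpha$ separates) over the cyclic subgroup $\langle \alpha \rangle$, and the associated Bass--Serre tree $T_\alpha$ carries a simplicial $\pi_1(S)$-action. A classical observation (essentially due to Stallings) is that for every $\gamma \in \pi_1(S)$, the translation length $\tau_{T_\alpha}(\gamma)$ equals the geometric intersection number $i([\gamma],\alpha)$. Thus the lemma reduces to a linear lower bound on translation lengths in $T_\alpha$.

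To deploy Theorem \ref{thm:MT} with the $0$-hyperbolic tree $T_\alpha$ as target, I would first check that the induced $\mu$-action is non-elementary. Since the support of $\mu$ generates $\pi_1(S)$ and $\langle \alpha\rangle$ has infinite index in $\pi_1(S)$, the group is not contained in any vertex stabilizer; a standard ping-pong argument in the subgroup generated by the support then produces two loxodromic isometries of $T_\alpha$ with disjoint fixed pairs in $\partial T_\alpha$. Theorem \ref{thm:MT} now supplies a constant $L>0$ with $\bP_w\bigl[\tau_{T_\alpha}(w_n) < Ln\bigr] \xrightarrow[\substack{n\to\infty}]{} 0$, and the translation-length identity immediately converts this into the required lower bound on $i(\gamma_n,\alpha)$. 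The generic statement follows by the same reasoning with Theorem \ref{thm: GTT} substituted for Theorem \ref{thm:MT}, together with Remark \ref{rem: large length} to pass from stable translation length back to linear growth in $n$.

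The main difficulty is producing a single constant $C$ valid for every $\alpha$, since the Maher--Tiozzo drift depends a priori on the action. I would attempt to extract uniformity from the fact that the standard generating set $\cA$ fills $S$: every essential simple closed curve $\alpha$ must meet at least one element of $\cA$, so at least one generator has translation length $\geq 1$ on $T_\alpha$, producing a uniform lower bound on the expected one-step displacement of the walk on $T_\alpha$. Identifying this expected displacement with the drift (via Kingman's subadditive ergodic theorem, applied uniformly across all splittings) should yield a $C$ independent of $\alpha$. This uniformity is the crucial point: the downstream collar lemma deduction of Theorem \ref{thm: compact} requires a linear intersection bound that does not degenerate as one varies among the (a priori uncontrolled) candidate short curves on $X(\gamma_n)$, and is therefore where I expect the real work of the proof to lie.
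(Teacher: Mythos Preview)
Your approach via Bass--Serre trees is genuinely different from the paper's, and for each \emph{fixed} $\alpha$ it is correct: the identity $\tau_{T_\alpha}(\gamma)=i([\gamma],\alpha)$ together with Theorem~\ref{thm:MT} (or Theorem~\ref{thm: GTT}) gives a constant $C_\alpha>0$ with the desired conclusion. You have also correctly located the only nontrivial point, namely uniformity of $C$ in $\alpha$.

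However, your proposed resolution of the uniformity issue has a gap. A uniform lower bound on the expected one-step displacement $\mathbb{E}\bigl[d_{T_\alpha}(x_0,w_1\cdot x_0)\bigr]$ does \emph{not} yield a uniform lower bound on the drift. Kingman's theorem identifies the drift with $\inf_n \frac{1}{n}\mathbb{E}\bigl[d_{T_\alpha}(x_0,w_n\cdot x_0)\bigr]$, which is the infimum of the sequence, not its first term; a nearly-recurrent walk can have large one-step displacement and arbitrarily small drift. The Maher--Tiozzo constant genuinely depends on the action, and nothing you have written rules out the possibility that it degenerates as $\alpha$ varies over the (infinite) set of simple closed curves. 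Some additional idea is required here, and ``Kingman applied uniformly across all splittings'' is not one.

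The paper sidesteps this difficulty entirely. Rather than working tree-by-tree, it establishes the deterministic inequality
\[
\tau_{\mathcal{C}}(f_\gamma) \ \le\ \min_{\alpha}\, i([\gamma],\alpha)
\]
by an induction on intersection number (factoring $\gamma$ at a crossing with $\alpha$ and using that point-pushing is a homomorphism). One then applies Theorem~\ref{thm:MT} \emph{once}, to the action of the point-pushing subgroup on the curve complex $\mathcal{C}(S)$, obtaining $\tau_{\mathcal{C}}(f_{w_n})>Ln$ with high probability. Since the left-hand side of the displayed inequality is independent of $\alpha$, uniformity is automatic. Your tree picture is more elementary in that it avoids the curve complex, but the price is precisely the uniformity problem; the paper's route buys uniformity for free at the cost of invoking a single, slightly more sophisticated hyperbolic space.
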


\begin{proof}
For $\bP_{w}$, just as in in the proof of Lemma \ref{lem:minimizer exists}, we argue by reinterpreting the random walk as a walk in the point-pushing subgroup of the mapping class group of the surface obtained by puncturing $S$. Then Theorem \ref{thm:MT} implies the existence of some $L>0$ so that 
\[ \bP_{w} \left[\frac{\tau_{\mathcal{C}}(f_{w_{n}})}{n} > L \right] \xrightarrow[\substack{n\to\infty}]{} 1~,\]
where $\tau_{\mathcal{C}}(f_{w_n})$ denotes the translation length of $f_{w_{n}}$ in the curve complex of $S$. One has the analogous statement for $\bP_{g}$ by using Theorem \ref{thm: GTT}. 

Thus, the lemma follows (for instance by setting $C=L$) by establishing the following general inequality, relating translation lengths of point-pushing maps to intersection numbers between the defining closed curve and any simple closed curve: 

\[ \tau_{\mathcal{C}}(f_{\gamma}) \leq \min_{\alpha} i([\gamma], \alpha)~.  \]
 We show this using induction on intersection number. Abusing notation slightly, we will use $\gamma$ to refer both to the element in $\pi_{1}(S,p)$ and to its conjugacy class.
 
 If there exists a simple closed curve $\alpha$ disjoint from $\gamma$, it is obviously fixed by $f_\gamma$ and the desired inequality follows. If there exists $\alpha$ intersecting $\gamma $ exactly once, $\alpha$ will be disjoint from $f_{\gamma}(\alpha)$ and the inequality follows again.

In general, let $\alpha$ be a simple closed curve so that $i(\gamma, \alpha)= n$. Base $\pi_1 (S)$ at one of the intersection points $x$ between $\alpha$ and $\gamma$. Then $\gamma$ can be expressed as a concatenation of $\gamma_1 , \gamma_2 \in \pi_1 (S,x)$ as follows: starting at $x$, traverse along $\gamma$ until arriving at another intersection point $x'$ between $\gamma$ and $\alpha$; then make a choice of one of the two sub-arcs of $\alpha$ bounded by $x, x'$ to travel along to arrive back at $x$. The resulting closed curve is defined to be $\gamma_1$. Then $\gamma_2$ begins at $x$ with the same sub-arc of $\alpha$ chosen to close up $\gamma_1$ but traversed in the opposite direction. Once arriving at $x'$, traverse along the portion of $\gamma$ that $\alpha_1$ missed. It is clear that $\gamma = \alpha_1 \ast \alpha_2$ in $\pi_1(S, x)$. 

Moreover, 

\[i(\alpha, \gamma_1) \leq 1, \hspace{2 mm} \mbox{and} \hspace{2mm} i(\alpha, \gamma_2) < i(\alpha,\gamma).  \]

Thus, by the base cases demonstrated above and the induction hypothesis, we have that

\[ d_{\mathcal{C}}(\alpha, f_{\gamma_{1}}(\alpha))\leq 1, d_{\mathcal{C}}(\alpha, f_{\gamma_{2}}(\alpha)) \leq n-1, \]
where $d_{\mathcal{C}}$ denotes distance in the curve graph. Since the identification of the point-pushing subgroup with $\pi_1 (S, x)$ is induced by an isomorphism, we have that 
\[f_{\gamma}= f_{\gamma_1 \ast \gamma_2} = f_{\gamma_2} f_{\gamma_1}. \] 

We then have 
\[d_{\mathcal{C}}(\alpha, f_{\gamma}(\alpha)) = d_{\mathcal{C}}(\alpha, f_{\gamma_1 \ast \gamma_{2}}(\alpha))= d_{\mathcal{C}}(\alpha, f_{\gamma_{2}}f_{\gamma_{1}}(\alpha)) \]

\[ \leq d_{\mathcal{C}}(\alpha, f_{\gamma_{2}}(\alpha))+d_{\mathcal{C}}(f_{\gamma_{2}}(\alpha), f_{\gamma}(\alpha))\]

\[ = d_{\mathcal{C}}(\alpha, f_{\gamma_{2}}(\alpha)) + d_{\mathcal{C}}(\alpha, f_{\gamma_{1}}(\alpha)) \leq (n-1) +1 = n.\qedhere\]

\end{proof}

Given $X \in \cT (S)$, a \textit{rose} for $\cA$ centered at $x \in S$, or an $\cA$-rose, is the union of minimum length representatives for each $w_i \in \cA$, taken over all representatives starting and ending at $x$. The point $x$ is called the basepoint of the rose. Let $\ell_{\cA}(X)$ denote the infimal length of a rose on $X$, taken over all possible choices of basepoint. 

As discussed in Section \ref{subsec: HG}, Kerckhoff's argument for length functions of filling curve systems implies that there is a unique point in $\cT(S)$ minimizing the function $\ell_{\cA}: \cT (S) \rightarrow \mathbb{R}$. Denote this point by $X_{\cA}$.

We are now ready to prove the first part of Theorem \ref{thm: compact}, stated here again for convenience:

\textit{ When $S$ is closed, for almost every sample path} $\left\{w_{n}\right\}$, \textit{there is a compact set $K$ containing $X(\gamma_{n})$ for all $n$.}

We remark that in the statement of Theorem \ref{thm: compact}, the event that $X(\gamma)$ does not exist is by convention interpreted as $X(\gamma) \notin K$ for any compact set $K \subset \cT(S)$. 

\begin{proof}

Suppose there is an exhaustion of an infinite diameter neighborhood $N$ (in the Teichm{\"u}ller metric) of $X_{\cA}$ by nested compact subsets $K_0 \subset K_1 \subset K_2 \subset... $ with $X_{\cA} \in \bigcap_{i} K_{i}$ and some positive $\bP_{w}$ probability so that $X_{n} \notin K_n$.

Since $\gamma_{n}$ has geodesic length at most on the order of $n$ on the surface $X_{\cA}$, it follows that 
\[ i(w_{n}/n, \mathcal{L}_{X_{\cA}}) = O(1),\]
where $\mathcal{L}_{X_{\cA}}$ denotes the Liouville current for the surface $X_{\cA}$. Since Liouville currents are filling, Bonahon's theorem (Theorem \ref{compact currents}) implies that $\left\{w_{n}/n \right\}$ is precompact. Passing to a subsequence if necessary, we can assume the existence of a geodesic current $c$ so that 
\[ \lim_{n \rightarrow \infty}\frac{w_{n}}{n} = c. \]
We claim that with both high $\bP_{w}$ and $\bP_{g}$ probability, $c$ is a filling geodesic current. To prove this, we will show that the \textit{systole} of $c$ -- a quantity introduced by Burger-Iozzi-Parreau-Pozzetti \cite{BIPP21} and defined by 
\[ \mbox{syst}(c) = \inf_{\gamma = \hspace{2 mm} \mbox{closed geodesic}}i( \gamma, c)\]
--is positive with high probability. Amongst other things, Burger-Iozzi-Parreau-Pozzetti prove that this is equivalent to being a filling geodesic current. 

We first show that with high probability, $c$ intersects every \textit{simple} closed curve a definite number of times. Indeed, Lemma \ref{linear intersection} states that there is some $C>0$ so that with high $\bP_{g}$ and $\bP_{w}$ probability, 
\[ \frac{i(\gamma_{n}, \alpha)}{n} > C, \]
for all simple closed curves $\alpha$. Thus, for $\alpha$ an arbitrary simple closed curve, one has 
\[ i(c, \alpha) = i( \lim_{n \rightarrow \infty} w_{n}/n, \alpha) = \lim_{n \rightarrow \infty} \frac{1}{n} \cdot i(w_{n}, \alpha) \geq C. \]

Now, let $\rho$ be an arbitrary closed curve and fix some small $\epsilon>0$ with $\epsilon < C/3$. Since weighted closed curves are dense in the space of currents, there exists some weighted closed curve $\eta$ so that 
\[ |i(\eta, \kappa) - i(c, \kappa)| < \epsilon, \]
for all closed curves $\kappa$ satisfying 
\[ \ell_{X_{\cA}}(\kappa) < 10 \cdot \ell_{X_{\cA}}(\rho). \]
Then in particular, $i(\rho, \eta)$ is within $C/3$ of $i(\rho, c)$. Let $\beta$ be any essential simple closed curve obtained from $\rho$ by resolving its intersections. It follows that $\ell_{X_{\cA}}(\beta) \leq \ell_{X_{\cA}}(\rho)$, and so 
\[ i(\beta, \eta) > 2C/3. \]
Note also that this intersection number is simply the set-theoretic intersection between $\beta$ and the underlying geodesic for $\eta$, multiplied by the weight of $\eta$. 

Let $\beta'$ be a representative of $\beta$ that coincides with a collection of segments of the geodesic representative for $\rho$. Since geometric intersection is obtained by geodesic representatives, it follows that the set theoretic intersection of $\beta'$ and the underlying geodesic for $\eta$ is at least that of $\beta$ and $\eta$, and thus one has 
\[ i(\rho, \eta) > 2C/3 \Rightarrow i(\rho, c) > C/3. \]
Since $C$ does not depend on $\rho$, one then has 
\[ \mbox{syst}(c) \ge C/3, \]
as desired. 

Since $c$ is filling, it is length minimized at a unique interior point $Y$ of $\cT(S)$. Finally, the main theorem of \cite{HS21} implies that the map sending a filling current to its minimizing metric in $\cT(S)$ is continuous. Therefore, one has that 
\[ \lim_{n \rightarrow \infty} X_{n} = Y, \]
and therefore for any $i$ sufficiently large so that 
\[ d_{\cT(S)}(K_{i}, X_{\cA}) > d_{\cT(S)}(X_{\cA}, Y), \]
one must have that $K_{i} \cap  \bigcup_{n=1}^{\infty} X_{n} = \emptyset$, as desired.

\end{proof}

We next address the second part of Theorem \ref{thm: compact}: length minimizers are uniformly thick, and minimum lengths grow linearly at a definite rate with high $\bP_{w}$ and $\bP_{g}$ probability. 

For the first statement, the main tool will be Lemma \ref{linear intersection}, which we remind the reader holds for both $\bP_{w}$ and $\bP_{g}$. Now, for any word $w \in B_{n}$, a curve $\gamma$ representing $w$ has geodesic length at most some $T \cdot n$ on $X_{\cA}$, for $T$ a universal constant (for example, an upper bound on the length of any petal of length-minimizing rose for $\cA$ on $X_{\cA}$). 

Choose $\epsilon>0$ such that 
\[ \sinh^{-1} \left(\frac{1}{\sinh \left(\epsilon \right)} \right) > 2TC, \]
where $C$ is as in Lemma \ref{linear intersection}. Then for any $X \in \cT(S)$ not in $\cT_{\epsilon}(S)$, it follows that 
\[ \ell_{X}(w_{n}) > T \cdot n, \]
since $w_{n}$ has to intersect any short curve on $X$ at least $n/C$ times and must thus cross the corresponding long collar the same number of times. Similarly, the proportion of curves $\gamma \in B_{n}$ for which the above lower bound holds goes to $1$ as $n \rightarrow \infty$. It follows that with either $\bP^{(n)}_{g}$ or $\bP_{w}$ probability going to $1$ as $n \rightarrow \infty$, length minimizing metrics are $\epsilon$-thick, as desired.

For the final statement of Theorem \ref{thm: compact}, we  will phrase the argument in terms of $\bP_{w}$, but the statement for $\bP_{g}$ is proved using an identical strategy. We use the fact that the mapping class group acts cocompactly on $\cT_{\epsilon}(S)$. This will allow us to reduce to the case that $X(\gamma_{n})$ is in the mapping class group orbit of $X_{\cA}$. Indeed, there is some universal constant $D$ (depending only on $\epsilon$ and the topology of $S$) so that $X(\gamma_{n})$ is within Teichm\"{u}ller distance $D$ of some point $f \cdot X_{\cA}$ where $f \in \mathcal{MCG}(S)$. Therefore, the length of $\gamma_{n}$ on $X(\gamma_{n})$ is approximated to within a uniformly bounded multiplicative error by the minimum length $\gamma_{n}$ achieves on any point in the $\mathcal{MCG}$-orbit of $X_{\cA}$. 

Thus, the minimum length of $\gamma_{n}$ is at least some $\delta$ times the minimum length of a curve $\rho_{n}$ on $X_{\cA}$ such that $\rho_{n}$ is in the same mapping class group orbit as $\gamma_{n}$. In Section \ref{section: intersection} below, we will show that with high $\bP_{w}$ probability, the self-intersection number of $w_{n}$ is at least $n^{2}/L$ for some universal constant $L$. Letting $\rho_{n}$ denote the conjugacy class in the mapping class group orbit of $w_{n}$ with minimum possible word length, we claim that there is some $L'$ so that the word length of $\rho_{n}$ is at least $L' \cdot n$. If not, then $\rho_{n}$ has sub-linear-in-$n$ word length, and it is very easy to see that $i(\rho_{n}, \rho_{n})$ must be sub-quadratic in $n$ (see the beginning of Section \ref{section: intersection} for the simple argument). On the other hand, self-intersection is preserved over a mapping class group orbit, and so this would imply that $i(w_{n}, w_{n})$ is also sub-quadratic in $n$, which contradicts the main theorem in the following section. 

Thus, there is some $L''$ so that 
\[ \ell_{X_{\cA}}(\rho_{n}) \ge L'' \cdot n\]
\[ \Rightarrow \ell_{X(\gamma_{n})}(\gamma_{n}) \ge \delta \cdot L'' \cdot n. \]
Resetting the value of $\epsilon$ as necessary, we obtain the final statement of Theorem \ref{thm: compact}.

\section{Self-intersection number} \label{section: intersection}

In this section, we prove Theorem \ref{thm: intersection}.
To emphasize the connections between topological properties of $\gamma_{n}$ and geometric properties of its length minimizing metric $X(\gamma_{n})$, we first explain how to derive a slightly weaker result from Theorem \ref{thm: compact} when $S$ is closed. This also serves as motivation for Conjecture \ref{conj: compact} (see Remark \ref{compactness implies intersection} below for more explanation). We next present an independent argument that works for any finite type surface and which holds for $\bP_{w}$ and $\bP_{g}$.


Note that a quadratic upper bound on self intersection for $w_n$, and in fact for any $\gamma \in B_{n}$, holds non-probabilistically. Indeed, for all $\gamma\in B_n$ one has
\[i(\gamma, \gamma) \leq n\cdot \max_{k}i(w_{k}, w_{k}) + \frac{n(n-1)}{2}~ = O(n^{2}). \]
This follows by choosing a representative for $\gamma$ on an $\cA$-rose (which we will also denote by $\gamma$); $\gamma$ returns to the basepoint $n$ times and we can select a small neighborhood $N$ of the basepoint so that all self-intersection occurs within $N$, or come from self-intersection of a generator with itself in the event that the generators are not simple. Then $N \cap \gamma$ is a collection of $n$ simple arcs with endpoints on $\partial N$, and $\gamma_n$ can be homotoped within $N$ so that any two of these arcs cross at most once. 

Thus, in Theorem \ref{thm: intersection}, for both $\bP_{w}$ and $\bP_{g}$, it suffices to prove the lower bound. We first show: 

\begin{proposition} \label{closed surface intersection} When $S$ is closed, for almost every sample path $\left\{w_{n} \right\}$, there is some $L$ so that 
\[ i(\gamma_{n}, \gamma_{n}) \ge \frac{n^{2}}{L}. \]
\end{proposition}

\begin{proof}  Suppose that the lower bound in Proposition \ref{closed surface intersection} does not hold. We then have
\[ i(\gamma_{n}, \gamma_{n}) = o(n^{2}). \]

Consider, as a geodesic current, the weighted curve $\gamma_{n}/n$. Let $\mathcal{L}_{\cA}$ denote the Liouville current on $X_{\cA}$. Since $\gamma_{n}$ has length at most on the order of $n$ on $X_{\cA}$, and because the $X_{\cA}$ -length of $\gamma_{n}$ 
is equal to the intersection number $i(\gamma_n,\mathcal L_{\cA})$, one has 
\[ i \left(\gamma_{n}/n, \mathcal{L}_{\cA} \right) = O(1). \]
Since $\mathcal{L}_{\cA}$ is filling, Theorem \ref{compact currents} implies that $\left\{ \gamma_{n}/ n \right\}$ lives in a compact subset of the space of geodesic currents. We can therefore pass to a convergent subsequence (and re-index the sequence if necessary) to obtain a limiting current, $c$. 

By the assumption that $i(\gamma_{n}, \gamma_{n}) = o(n^{2})$, it follows that 
\[ i(c, c) = \lim_{n \rightarrow \infty} \frac{1}{\ell_{X_{n}}(\gamma_{n})^{2}}  i(w_{n}, w_{n}) = o(1)= 0. \]
Thus, $c$ is a geodesic lamination. 

 Let $C_{fill}(S)$ denote the subset of $C(S)$ consisting of filling geodesic currents and consider the function $\pi: C_{fill}(S) \rightarrow \cT(S)$ sending a filling current $c$ to the unique point in $X(c) \in \cT(S)$ minimizing its length. This is a continuous projection (see for instance \cite{HS21}). One can extend $\pi$ to a continuous function $\overline{\pi}$ with domain $C_{fill}(S) \cup \mathcal{ML}(S)$ and codomain $\cT(S) \cup \mathcal{PML}(S)$ as follows: if $\lambda$ is a geodesic lamination, then $\overline{\pi}(\lambda) = [\lambda]$, the point in Thurston's compactification associated to the projective class of $\lambda$. 

For shorthand and to simplify notation, we will use $X_{n}$ to denote $X(\gamma_{n})$ in what follows. 
Continuity of $\overline{\pi}$ implies that 
\[ \lim_{n \rightarrow \infty} \overline{\pi}(\gamma_{n}/\ell_{X_{n}(\gamma_{n})}) = \overline{\pi}(\lim_{n \rightarrow \infty}\gamma_{n}/\ell_{X_{n}(\gamma_{n})}),\]
whenever both sides exist. If $c$ is a geodesic lamination, the left hand side is a point in $\mathcal{PML}(S)$, whereas by Theorem \ref{thm: compact}, the right hand side is a point in the interior of $\cT(S)$. This is a contradiction, and therefore $c$ can not be a geodesic lamination, and hence $i(\gamma_{n}, \gamma_{n})$ can not grow sub-quadratically. This completes the proof.

\end{proof}

\begin{remark} \label{compactness implies intersection} We note that affirming Conjecture \ref{conj: compact} would- in combination with the proof of Proposition \ref{closed surface intersection}- yield a complete proof of Theorem \ref{thm: intersection} for closed surfaces. Indeed, if one knew that $X(\gamma_{n})$ stayed within distance $D$ of $X_{\cA}$ with high probability, one could bound the self-intersection number of the limiting current $c$ uniformly from below, which would in turn imply a uniform (over almost every sample path) upper bound on the constant $L$ in the statement of Proposition \ref{closed surface intersection}.
\end{remark}

 The proof of Theorem \ref{thm: intersection} uses the assumption that $S$ is closed in fairly essential ways. For example, when $S$ has punctures, it is easy to see that Theorem \ref{compact currents} is false. In that setting, the complement of a filling curve $\gamma$ will contain punctured disks; one can then choose a sequence $(\gamma_{i})$ of closed curves each intersecting $\gamma$ some bounded number of times, but which self intersect more and more by winding more times about a cusp in the complement of $\gamma$. Continuity of self-intersection implies that $N(c,R)$ is not compact: the sequence $(\gamma_i)$ cannot have a convergent subsequence. 
 
Moreover, the conclusion of Proposition \ref{closed surface intersection} is not independent of the choice of sample path; a priori, given any $\epsilon>0$, there could still be a positive $\bP_{w}$ probability that $i(\gamma_{n}, \gamma_{n})$ is less than $\epsilon \cdot n^{2}$. Therefore, to obtain the full power of Theorem \ref{thm: intersection}, we need a different argument.

\vspace{2 mm}

\textit{Proof of Theorem \ref{thm: intersection}:} 

\vspace{2 mm}

One starts with Lemma \ref{linear intersection} to say that with high $\bP_{g}$ and $\bP_{w}$ probability, $\gamma_{n}$ intersects every simple closed curve at least $n/C$ times. On the other hand, in \cite{S16} it is shown that there is some constant $J= J(S)$ depending only on the topology of the underlying surface $S$ so that if $\gamma$ is any closed curve with $i(\gamma, \gamma) \leq K$, then there is some essential simple closed curve $\alpha$ satisfying 
\[ i(\alpha, \gamma) <  J \cdot \sqrt{K}. \]
Thus, we deduce that if 
\[ i(\gamma_{n}, \gamma_{n}) < \frac{n^{2}}{C^{2}J^{2}} \]  with positive probability, there must exist a simple closed curve $\alpha_{n}$ intersecting $\gamma_{n}$ less than $n/C$ times with positive probability, which contradicts Lemma \ref{linear intersection}. \hfill $\Box$

\section{Lifting degree} \label{section: lifting}

In this section, we prove Theorems \ref{thm: simple lifting degree for punctures} and \ref{thm: simple lifting degree for closed surfaces}. The upper bound on simple lifting degree in both settings is straightforward using work of Patel \cite{Patel14} and the fact that given a fixed hyperbolic surface $X \in \cT(S)$, any $\gamma \in B_{n}$ admits a representative with $X$-length at most $K \cdot n$, where $K$ depends only on $X$. It follows that $\gamma \in B_{n}$ has length at most $K \cdot n$ on the surface used by Patel to bound simple lifting degrees from above, and thus $\deg(\gamma) \leq K \cdot 17 \cdot n$. 






We next tackle the lower bound on simple lifting degree for $\bP_{g}$ in the case of punctures or boundary components, and for a generic conjugacy class in the setting of closed surfaces. The argument will be simpler to organize by replacing each puncture with a boundary component; this has no effect on the relevant counts. By $S_{g,b}$, we will mean the surface of genus $g$ with $b \geq 0$ boundary components. 

\begin{theorem} \label{thm: generic lifting} When $\pi_{1}(S)$ is free, there is some $\mathcal{M}>0$ so that with high $\bP_{g}$ probability, the simple lifting degree is at least $\mathcal{M} \cdot n/\log(n)$. Moreover, when $S$ is closed, 

\[ \lim_{n \rightarrow \infty} \frac{ \#([c] \in C_{n}: \deg([c]) \geq \mathcal{M} \cdot n/log(n))}{\#(C_{n})} \rightarrow 1. \]

\end{theorem}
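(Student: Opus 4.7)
The plan is to translate the question into a counting problem that pits the number of conjugacy classes in $C_{n}$ of low simple lifting degree against the exponential growth rate of $\pi_{1}(S)$. Fix a complete finite-area hyperbolic metric $\sigma$ on $S$. By the Milnor-Svarc lemma, there exists $K > 0$ so that every $[c] \in C_{n}$ admits a $\sigma$-geodesic representative of length at most $K n$. If $\deg([c]) = d$ is realized by a finite cover $\Sigma \to S$ of degree $d$ carrying a simple elevation $\tilde{\gamma}$, then the pulled-back hyperbolic metric gives a closed geodesic on $\Sigma$ of length at most $d \cdot Kn$, since $\tilde{\gamma}$ is a lift of a power at most $d$ of the geodesic representative of $[c]$.

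Next, for fixed $D$, I would bound the number of conjugacy classes $[c] \in C_{n}$ with $\deg([c]) \leq D$ from above by the count of pairs (index-$d$ subgroup of $\pi_{1}(S)$, simple closed geodesic of length at most $dKn$ on the corresponding cover), ranging over $d \leq D$. The number of conjugacy classes of index-$d$ subgroups is at most $d^{O(d)}$ by classical subgroup enumeration in free and surface groups. For the simple-curve count on each cover $\Sigma$ of degree $d$, fix a pants decomposition $\mathcal{P}$ of $\Sigma$; applying the Balacheff--Parlier--Sabourau theorem or the construction of Remark \ref{easier but worse}, one may take each curve of $\mathcal{P}$ to have length polynomial in $d$. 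Since the systole of $\Sigma$ is bounded below by that of $\sigma$, the Dehn--Thurston coordinates of any simple multicurve of length at most $L$ on $\Sigma$ live in $\mathbb{Z}^{2\xi(\Sigma)}$ (with $\xi(\Sigma) = O(d)$) and are bounded in absolute value by $C L$ for a constant $C$ depending only on $S$. This yields
\[
\#\{[c] \in C_n : \deg([c]) \leq D\} \;\leq\; D^{O(D)} \cdot (CDKn)^{O(D)} \;=\; \exp\bigl(O(D \log(Dn))\bigr).
\]

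Choosing $D = \mathcal{M} \cdot n/\log n$ for sufficiently small $\mathcal{M}$, the right-hand side is of order $\exp(O(\mathcal{M}) \, n)$, while $\#C_{n}$ grows like $\lambda^{n}/\mathrm{poly}(n)$ for $\lambda > 1$ the exponential growth rate of $\pi_{1}(S)$. Taking $\mathcal{M}$ small enough so that the implicit constants satisfy the required inequality against $\log \lambda$ makes the proportion of ``bad'' conjugacy classes go to zero. To upgrade this to the $\bP_{g}$ statement when $\pi_{1}(S)$ is free, I would multiply by a uniform upper bound on the number of representatives of a single conjugacy class inside $B_{n}$: in a free group of rank $r$, a conjugacy class of cyclically reduced length $\ell$ meets $B_{n}$ in at most $O(n)\cdot(2r-1)^{(n-\ell)/2}$ elements, whose total contribution (summed against the conjugacy-class bound above) remains strictly sub-exponential relative to $|B_{n}| \sim (2r-1)^{n}$.

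The principal technical hurdle I anticipate is the Dehn--Thurston enumeration on covers of unbounded complexity: one must verify that both the base constant $C$ and the exponent in the polynomial count depend only on the base surface $S$ and scale linearly in $d$. The short pants decomposition supplied by Balacheff--Parlier--Sabourau is what makes this possible, as it is what controls the twist coordinates; without a length bound on the pants curves, twisting around long components could contribute factors that erode the quantitative savings and degrade the final bound on $\mathcal{M}$. A secondary delicate point is the sharp enumeration of covers with conjugacy-class counting; classical counts suffice here, but one must be careful not to overcount in a way that inflates $D^{O(D)}$ to something larger.
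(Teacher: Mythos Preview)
Your strategy matches the paper's proof closely: count covers via subgroup enumeration, count simple curves on each cover via Dehn--Thurston coordinates relative to a short pants decomposition, and compare the product against the exponential growth of conjugacy classes (then, in the free case, against $|B_n|$ via the elementary bound $\#([c]\cap B_n)\le n\cdot \#B_{(n-\|c\|)/2}$). The final arithmetic you outline is exactly what the paper carries out.

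The one point that needs repair is your claim that the Dehn--Thurston coordinates on the cover $\Sigma$ are bounded by $CL$ with $C$ depending only on $S$. For the twist coordinates this is fine (you use the systole lower bound inherited from $\sigma$), but for the intersection numbers it is not: the pants curves of your $\mathcal{P}$ on $\Sigma$ have length only polynomially bounded in $d$, and the collar of a curve of length $\ell$ has width $\asymp e^{-\ell/2}$. A naive collar-lemma crossing count therefore gives intersection coordinates of size $L\cdot e^{\mathrm{poly}(d)}$, and the resulting curve count $\exp(\mathrm{poly}(d))$ is no longer subexponential in $n$ when $d\sim n/\log n$. The paper avoids this by never estimating intersections on $\Sigma$ directly: it chooses the pants decomposition on $\Sigma$ to contain the full preimage $\mathcal{P}_Y$ of a fixed pants decomposition on $S$, and then bounds $i(\tilde\gamma,w)$ by $d\cdot i(\gamma,\rho(w))$, computing the latter intersection downstairs in a single hexagon of the fixed pair of pants $\mathfrak{p}$ (Lemmas~\ref{lem: good pants on Y} and~\ref{lem: w comes from S} and the paragraph following). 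This pushes all geometric constants back to $S$ and yields a bound $Q(d)$ that is genuinely polynomial in $d$. Once you insert this ``project to the base'' trick, your argument goes through as written.
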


\begin{proof} Let $d$ denote the simple lifting degree $\deg(\gamma)$ of some curve $\gamma \in B_{n}$. A theorem of Hall \cite{Hall49} yields the following recursive count for $N_{d}(r)$, the number of conjugacy classes of index $d$ subgroups of $F_{r}$, the free group of rank $r$:

\[ N_{d}(r) = d \cdot (d!)^{r-1} - \sum_{k=1}^{d-1}(d-k)!^{r-1} \cdot N_{k}(r) \leq (d+1)!^{r} \]

For surface groups, we get the following count for (what we will call) $N_{d}(2g)$ due to Mednykh (see for instance \cite[Ch.~14]{LS03}): 

\[  N_{d}(2g)= \frac{h_{d}}{(d-1)!} - \sum_{k=1}^{d-1}\frac{h_{d-k}\cdot N_{k}(2g)}{(d-k)!}~, \]
where $h_{d}$ is the quantity
\[ h_{d}= (d!)^{(2g-1)}\cdot \sum_{\chi \in \text{Irr}(\Sigma_{d})} \chi(1)^{2-2g}~. \]
In the above sum, $\chi$ ranges over all irreducible characters of $\Sigma_{d}$, the symmetric group on $d$ symbols. 
 Irreducible representations of $\Sigma_{d}$ are in correspondence with conjugacy classes of $\Sigma_{d}$, and thus the number of irreducible representations is equal to the number of partitions of $\left\{1,..., d \right\}$. Therefore, using the classical asymptotic expansion for the number of such partitions and the fact that the degree of an irreducible representation must divide the order of the group, we get that

\[ \frac{N_{d}(2g)}{(d!)^{2g+1}\cdot e^{\pi \sqrt{2d/3}}} = O(1),   \]
from whence it follows that for $d$ sufficiently large, 
\[ N_{d}(2g) \leq M \cdot (d!)^{2g+2} \]
for some universal constant $M>0$. 

Fix $c>0$ and consider the unique hyperbolic surface $\mathfrak{p}$ homeomorphic to a pair of pants with all boundary components having geodesic length $c$. Then, fix once and for all a choice of hyperbolic metric on $S$ with totally geodesic boundary that comes from gluing together the appropriate number of copies of $\mathfrak{p}$ in some configuration that produces a surface homeomorphic to $S$. By construction, this hyperbolic surface (which, abusing notation slightly, we will also call $S$) admits a pants decomposition $\mathcal{P}= \left\{\gamma_{1},..., \gamma_{m} \right\}$ where each curve has geodesic length $c$ (and where $m= m(S)$ depends only on the topology of $S$). It follows by the Milnor-Svarc lemma that $\gamma$ has geodesic length at most $\tau \cdot n$ for some universal constant $\tau$ depending only on $S$.

Let $\rho: Y \rightarrow S$ be a covering space of degree at most $d$. The choice of metric on $S$ pulls back to $Y$- again by a slight abuse of notation, refer to the resulting hyperbolic surface by $Y$. Let $\mathcal{P}_{Y}$ denote the full pre-image of $\mathcal{P}$ under $\rho$ to $Y$.

We will first specify a pants decomposition on $Y$, equipped with this metric, such that each curve has length bounded above polynomially in terms only of $c,d$.

\begin{lemma} \label{lem: good pants on Y} There is an explicit polynomial $\mathcal{F}(x,y)$ of degree $3$ in $x$ and degree $4$ in $y$ satisfying the following. There exists a pants decomposition $P_{Y}$ of $Y$ containing $\mathcal{P}_{Y}$ as a sub-multi-curve such that each curve in $P_{Y}$ has geodesic length at most $\mathcal{F}(d\cdot |\chi(S)|, d \cdot c)$.

\end{lemma}

\begin{proof}

 Each curve in $\mathcal{P}_{Y}$ has geodesic length at most $d \cdot c$. Note that $\mathcal{P}_{Y}$ is a simple multi-curve, but it need not be a pants decomposition on $Y$. In the event that $\mathcal{P}_{Y}$ is not a pants decomposition, there exists complementary components of $S \setminus \mathcal{P}_{Y}$ which are not pairs of pants; let $Z \subset Y$ be such a subsurface. Then $Z$ is bounded by curves in $\mathcal{P}_{Y}$. 

There is then a $1$-Lipschitz map from a hyperbolic surface (with boundary) $W$ to $Z$, obtained by perhaps identifying distinct boundary components of $W$. The surface $W$ has area at most $\text{Area}(Y)= d \cdot \text{Area}(S)$, and each boundary component of $W$ has length at most $d \cdot c$. 

Using the polynomial $\mathcal{F}(x,y)$ mentioned at the end of Section \ref{subsec: HG}, we see that $W$ admits a pants decomposition of total length at most $\mathcal{F}(d \cdot |\chi(S)|, d \cdot c)$. We can push this pants decomposition forward to $Z$, and apply this to each non-pants subsurface in the complement of $\mathcal{P}_{Y}$ to obtain a complete pants decomposition of $Y$ such that each curve has length at most $\mathcal{F}(d \cdot |\chi(S)|, d \cdot c)$. \qedhere

\end{proof}

\begin{remark} \label{not too short} Note that there is a universal constant $\kappa$, depending only on $S$, such that each curve in $P_{Y}$ has geodesic length at least $\kappa$. Indeed, one can set $\kappa$ equal to the length of the systole of $S$. 

\end{remark}

Now, assume $\gamma \subset S$ is a closed curve that admits a simple elevation $\tilde{\gamma}$ to $Y$. Our goal will be to bound the Dehn-Thurston coordinates of $\tilde{\gamma}$ with respect to $P_{Y}$. To begin, by Remark \ref{not too short}, each twisting number is bounded above by $2\kappa \cdot d \cdot \tau \cdot n$, for if $\tilde{\gamma}$ twists more than this number of times about any curve in $P_{Y}$, it will be too long. 

It remains to bound the intersection number $i(\tilde{\gamma}, w)$ for each $w \in P_{Y}$. There are two possible cases, depending on whether or not $w$ is in the pre-image of $\mathcal{P}$. 

\begin{lemma} \label{lem: w comes from S} There is a universal constant $\eta>0$ such that if $w \in \mathcal{P}_{Y}$, then 
\[ i(\tilde{\gamma}, w) \leq d \cdot \eta \cdot n. \]
\end{lemma}

\begin{proof} Recall that the geodesic length of $\gamma$ on $S$ is at most $\tau \cdot n$ for some universal constant $\tau$. We can decompose $\gamma$ into sub-arcs bounded by intersection points with $\mathcal{P}$ and containing no such intersection points in their interiors. Thus each sub-arc lies entirely inside of one pair of pants, each of which is a copy of $\mathfrak{p}$. If we then set $\eta$ to be the product of $\tau$ with the minimum distance between any two boundary components of $\mathfrak{p}$, we see that 
\[ i(\gamma, \rho(w)) \leq \eta \cdot n. \]

\end{proof}

If, on the other hand, $w$ is not in the pre-image of $\mathcal{P}$, then $w$ comes from the construction outlined in Lemma \ref{lem: good pants on Y}, and $\rho(w)$ lies entirely inside of one copy of $\mathfrak{p}$ on $S$. It follows that 
\[ \ell_{S}(\rho(w)) \leq \mathcal{F}(d \cdot |\chi(S)|, d \cdot c). \]

We wish to control the number of intersections between $\rho(w)$ and $\gamma$; we will then get a bound on the intersection number between $w$ and $\tilde{\gamma}$ by multiplying by $d$. 


Decompose $\mathfrak{p}$ into two right-angled hyperbolic hexagons $h_{1}, h_{2}$ by cutting along geodesic arcs orthogonal to the boundary components. Note that each sub-arc of $\rho(w) \cap h_{i}$ ($i=1,2$) is an embedded arc with endpoints on distinct edges of $\partial h_{i}$ not arising from $\partial \mathfrak{p}$. It follows that the number of such sub-arcs is bounded above by $(c/2) \cdot \mathcal{F}$. Indeed, $\rho(w)$ has length at most $\mathcal{F}$, and each edge of $h_{i}$ coming from $\partial \mathfrak{p}$ has length $c/2$ and it constitutes the shortest path between the two edges on either side of it. Since $\gamma$ has geodesic length at most $\tau \cdot n$, by the same argument, the number of components of $\gamma \cap h_{1}$ is at most $(c/2) \cdot \tau \cdot n$. 

Since a given sub-arc of $\gamma \cap h_{i}$ can intersect one of $\rho(w) \cap h_{i}$ ($i= 1,2$) at most once, we get that 

\[ i(\rho(w), \gamma) \leq \frac{c^{2} \cdot \tau \cdot n \cdot \mathcal{F}}{4} \Rightarrow i(w, \tilde{\gamma}) \leq \frac{d \cdot c^{2} \cdot \tau \cdot n \cdot \mathcal{F}}{4}. \]

Hence, each Dehn-Thurston coordinate of $\tilde{\gamma}$ with respect to $P_{Y}$ is bounded above by 
\[ 2 \kappa \cdot d \cdot \tau \cdot n + d \cdot \eta \cdot n + \frac{d \cdot c^{2} \cdot \tau \cdot n \cdot \mathcal{F}}{4} =: Q(d),\]
which we note depends polynomially on $d$ of degree at most $5$. The number of simple closed (multi-)curves on $Y$ with all Dehn-Thurston coordinates bounded above by $Q(d)$ is at most the number of integer points in a cube in $\mathbb{R}^{\dim(\cT(Y))}$ with side lengths bounded by $Q(d)$, which of course is 
\[ Q(d)^{d \cdot \dim(\cT(S))}. \]
It follows that the number of homotopy classes of curves on $S$ with simple lifting degree at most $d$ is bounded above by 
\[ M \cdot (d+1)!^{2g+p+2} \cdot Q(d)^{d \cdot \dim(\cT(S))} \]
\[ \sim M \cdot \left( \frac{(d+1)}{e} \right)^{(d+1)(2g+p+2)} \cdot Q(d)^{d \cdot \dim(\cT(S))}. \]
 Taking the logarithm of the above produces 
 \[ (d+1)(2g+p+2) \cdot \log \left( M \cdot (d+1) \right) + d \cdot \dim(\cT(S)) \cdot \log(Q(d))-(d+1)(2g+p+2). \]
Now, if $d = o(n/log(n))$, the above grows sub-linearly in $n$.

At this stage, we have shown the existence of some $\mathcal{M}$ such that the number of distinct conjugacy classes $[c]$ with a representative $c \in B_{n}$ having lifting degree less than $\mathcal{M} \cdot n/log(n)$, grows sub-exponentially in $n$. This completes the proof of the lower bound for generic conjugacy classes for closed surfaces, and thus for Theorem \ref{thm: simple lifting degree for closed surfaces}; indeed, $|C_{n}|$ grows exponentially with the same growth rate of the Cayley graph itself (see for instance \cite{CK02}). 

It therefore remains to prove the lower bound for a generically chosen \textit{element} in the free group setting. Theorem \ref{thm: generic lifting} will be implied by the following lemma which bounds the size of an intersection of any conjugacy class with $B_{n}$ above by a function that grows at most exponentially with a strictly smaller growth rate than $|B_{n}|$ itself.

\begin{lemma} \label{lem: bounding conjugacy} Let $B_{n}$ denote the ball of radius $n$ in the Cayley graph of a free group $F_{r}$ with respect to a free basis. Then 
\[ \#(B_{n} \cap [c]) \leq n \cdot \#(B_{(n-||c||)/2}), \]
where $[c]$ is any conjugacy class and $||c||$ is the minimum word length for any representative of $[c]$. 

\end{lemma}

\begin{remark} After constructing the following argument, the authors found a very similar and more concise proof of Lemma \ref{lem: bounding conjugacy} in Parkkonen-Paulin \cite{PP15}. We include our original version here for self-containment. 
\end{remark}

\begin{proof}  Let $c \in F_{r}$ be a cyclically reduced representative of the conjugacy class $[c]$; $c$ is also a minimal length representative.

Let 
\[ c= s_{1}s_{2}...s_{p} \]
be a minimal length spelling of $c$. By an \textit{initial subword} of $c$, we will mean any word of the form $s_{1}...s_{k}$ for $k \leq p$. Similarly, a \textit{tail subword} of $c$ will be any word of the form $s_{k}...s_{p}$ for $k \geq 1$. Say that a word $w \in F_{r}$ satisfies the \textit{no-cancellation condition} with respect to $c$ if $w$ satisfies both of the following: 
\begin{itemize}
    \item No tail subword of $w$ coincides with the inverse of an initial subword of $c$;
    \item No tail subword of $w$ coincides with a tail subword of $c$. 
\end{itemize}

If $w$ satisfies the no-cancellation condition with respect to $c$, then the word length of $wcw^{-1}$ is precisely $2\mbox{length}(w)+ \mbox{length}(c)$. It follows that the number of words $w$ that satisfy the no-cancellation condition and that have the property that $wcw^{-1}$ has word length at most $n$, is at most the number of words with word length $(n-||c||)/2$, i.e., $\#(B_{(n-||c||)/2})$. 

On the other hand, if $x \in F_{r}$ does not satisfy the no-cancellation condition, there is some initial subword $x'$ of $x$ that does satisfy the no-cancellation condition, and which is obtained from $x$ by deleting the tail subword that experiences cancellation with $c$. We will refer to $x'$ as the \textit{no-cancellation reduction} of $x$. Up to multiplication by a power of $c$, to each word $x'$ satisfying the no-cancellation condition, there are at most $\mbox{length}(c)$ words whose no-cancellation reduction is $x$: these correspond to multiplying $x'$ on the right by all possible tail subwords of $c$. 

Since $wc^{k}$ and $w$ conjugate $c$ to the same element, it follows that for every conjugator $w$ satisfying the no-cancellation condition, there are at most $\mbox{length}(c) \leq n$ many conjugates coming coming from conjugators that do not satisfy the no-cancellation condition and whose no-cancellation reduction is $w$. Thus the total number of conjugates of $c$ with length at most $n$ is at most $n \cdot \#(B_{(n-||c||)/2})$, as desired.

\end{proof}

We now complete the proof of Theorem \ref{thm: generic lifting}: letting $J(n)$ denote the number of conjugacy classes with minimal word length at most $n$ and with the property that the simple lifting degree is less than $\mathcal{M} \cdot n/\log(n)$, Lemma \ref{lem: bounding conjugacy} implies that the total number of elements in $B_{n}$ with lifting degree less than $\mathcal{M} \cdot n/log(n)$ is at most 
\[ T \cdot J(n) \cdot n \cdot \#(B_{n/2}). \]
Since $J(n)$ grows sub-exponentially, we have that 
\[ \lim_{n \rightarrow 0} \frac{J(n) \cdot n \cdot \#(B_{n/2})}{\#(B_{n})} = 0, \]
as desired.

\end{proof}

We next turn our attention to $\bP_{w}$ in the setting of punctures. We  assume that $\mu$ is supported on a minimal rank symmetric generating set and that it assigns equal weight to each generator. In this setting, the $\bP_{w}^{(n)}$ probability of a given event is simply the proportion of length $n$ sample paths terminating at an element described by that event. When the Cayley graph is a tree, there is a one-to-one correspondence between elements and non-backtracking paths originating at the identity; we will use this to derive the desired result for $\bP_{w}$ from the result above regarding $\bP_{g}$ in the presence of punctures or boundary components.  

\begin{remark} \label{PS versus hitting} It would be interesting to study $\bP_{w}$ in the setting of more general distributions $\mu$. Our strategy is to understand $\bP_{w}$ from Theorem \ref{thm: generic lifting} and the properties of $\bP_{g}$. The challenge of extending this strategy so that it applies to more general choices of $\mu$ relates to the difficulty of comparing the hitting measure for a random walk with the Patterson-Sullivan measure on the boundary of a hyperbolic group $G$-- see for instance Proposition 1.7 of \cite{GTT18}. 
\end{remark}

\begin{proposition} \label{prop: lifting degree for walks with punctures} With high $\bP_{w}$ probability, $w(n)$ has simple lifting degree at least (on the order of) $n/log(n)$ when $S$ has a puncture. \end{proposition}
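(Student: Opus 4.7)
The plan is to exploit the fact that, since $\pi_{1}(S)$ is free and $\mu$ is uniform on a symmetric free basis, the Cayley graph is the $2r$-regular tree and the walk is invariant under the stabilizer of the identity in its automorphism group. Since this stabilizer acts transitively on each sphere $S_{k} = \{g : |g| = k\}$, conditional on $|w_{n}| = k$ the endpoint $w_{n}$ is distributed uniformly on $S_{k}$. This reduces the proposition to a $\bP_{g}$-style counting bound on spheres (rather than balls), which I then combine with linear drift to conclude.

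Concretely, I would proceed in three steps. First, by Theorem \ref{thm:MT} there is $L > 0$ so that $\bP_{w}[|w_{n}| < Ln] \to 0$. Second, let $|S_{k}^{\mathrm{bad}}|$ denote the number of elements of $S_{k}$ with simple lifting degree less than $\mathcal{M} \cdot n/\log n$. Tracing through the proof of Theorem \ref{thm: generic lifting}, the number of conjugacy classes having a representative in $B_{n}$ with such lifting degree is bounded above by $e^{C \mathcal{M} n}$ for a constant $C$ depending only on the topology of $S$; crucially, shrinking $\mathcal{M}$ makes this exponential rate as small as desired. Next, by Lemma \ref{lem: bounding conjugacy}, each such conjugacy class contributes at most $k \cdot |B_{k/2}|$ elements to $S_{k}$. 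Using the free-group estimates $|S_{k}| \asymp (2r-1)^{k}$ and $|B_{k/2}| \asymp (2r-1)^{k/2}$, I obtain
\[
\frac{|S_{k}^{\mathrm{bad}}|}{|S_{k}|} \;\lesssim\; n \cdot \exp\!\left(C \mathcal{M} n - \tfrac{k}{2}\log(2r-1)\right).
\]
For $k \geq Ln$, this tends to $0$ provided $C \mathcal{M} < \tfrac{L}{2}\log(2r-1)$, which can always be arranged by shrinking $\mathcal{M}$.

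Finally, assembling these pieces via the uniformity on spheres,
\[
\bP_{w}\bigl[\deg(w_{n}) < \mathcal{M} n/\log n\bigr] \;=\; \sum_{k} \bP_{w}[|w_{n}| = k] \cdot \frac{|S_{k}^{\mathrm{bad}}|}{|S_{k}|} \;\leq\; \bP_{w}[|w_{n}| < Ln] \;+\; \sup_{k \geq Ln} \frac{|S_{k}^{\mathrm{bad}}|}{|S_{k}|} \;\longrightarrow\; 0,
\]
while the matching upper bound $\deg(w_{n}) \lesssim n$ is the non-probabilistic one already noted at the start of this section. The hard part will be calibrating $\mathcal{M}$ against the drift rate $L$ and the spherical growth $\log(2r-1)$; this is most delicate when $\pi_{1}(S) = F_{2}$ (for instance $S = S_{1,1}$), where the drift is only $1/2$ and so the sub-exponential growth rate $C\mathcal{M}$ from the bad conjugacy class count must be driven strictly below $(\log 3)/4$ by taking $\mathcal{M}$ small in Theorem \ref{thm: generic lifting}.
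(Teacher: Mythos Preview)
Your argument is correct and rests on the same core observation as the paper's: in the free group with basis, the simple random walk is uniform on spheres conditional on distance to the identity, so the $\bP_{w}$ statement reduces to a $\bP_{g}$-style count on spheres. The paper phrases this by noting that the number $T^{(n)}(x)$ of length-$n$ sample paths ending at $x$ depends only on $d(1,x)$; your automorphism-group formulation is equivalent and a bit cleaner.

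There is one organizational difference worth noting. The paper invokes Theorem~\ref{thm: limit of progress exists} to localize $|w_{n}|$ to a narrow window $[mn-f(n),\,mn+f(n)]$ with $f(n)=o(n)$, and then compares weighted sums $T(n)$ and $G(n)$ over that window. Your argument avoids this: by writing $\bP_{w}[\deg(w_{n})<\mathcal{M}n/\log n]=\sum_{k}\bP_{w}[|w_{n}|=k]\cdot|S_{k}^{\mathrm{bad}}|/|S_{k}|$ and splitting only at $k=Ln$, you need nothing more than the linear drift of Theorem~\ref{thm:MT}. You also make explicit that the bad-conjugacy-class count is bounded by $e^{C\mathcal{M}n}$ with $C\mathcal{M}\to 0$ as $\mathcal{M}\to 0$, which is exactly what falls out of the estimate at the end of the proof of Theorem~\ref{thm: generic lifting} when one sets $d=\mathcal{M}n/\log n$; the paper leaves this calibration implicit under the phrase ``grows sub-exponentially.'' Your closing worry about $F_{2}$ is unnecessary: since $\mathcal{M}$ can be taken arbitrarily small, the inequality $C\mathcal{M}<\tfrac{L}{2}\log(2r-1)$ is always achievable regardless of $r$ or $L$.
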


\begin{proof}  
 
Under the assumptions made on $\mu$, the $\bP^{(n)}_{w}$ probability that $w_{n}$ equals some given $x \in \pi_{1}(S)$ is equal to the proportion of length $n$ sample paths terminating at $x$. Let $T^{(n)}(x)$ denote the number of length $n$ sample paths terminating at $x$. Since the Cayley graph is a tree, there is a unique non-backtracking path between the identity and any such $x$-- denote this path by $p(x)$. 

We claim that $T^{(n)}(x)$ is a function only of the distance $d(x)$ between $x$ and the identity. For example if $d(x)= n$, $T^{(n)}(x)= 1$. If $d(x) = n- C$ for some $C>0$, any sample path terminating at $x$ must be comprised of a family of loops with total length $C$ appended to $p(x)$. Given any loop family (defined to be a finite collection of loops based at potentially different vertices) in the Cayley graph, by its \textit{type}, we will mean the result of translating each loop in the family to the identity. There are only finitely many types of loop families with length $C$; an arbitrary sample path contributing to $T^{(n)}_{x}$ is given by choosing one such family, and for each loop $\ell$ in that family, a choice of vertex along $p(x)$ at which it is based. Let then $T^{k}$ denote the size of $T^{k}(x)$ for any $x \in S_{k}$, the sphere of radius $k$ about the identity in the Cayley graph. 

From the proof of Theorem \ref{thm: generic lifting}, there is a constant $K>0$ so that if $J(n)$ denotes the number of elements in $B_{n}$ with lifting degree less than $\mathcal{M} \cdot n/\log(n)$ then 
\[ \lim_{n \rightarrow \infty} \frac{J(n)}{\#(B_{\lambda \cdot n/2})} = 0, \]
for all $\lambda> 1$. On the other hand, if $H(n)$ denotes the number of elements in $S_{n}$ with lifting degree \textit{at least} $\mathcal{M} \cdot n/\log(n)$, $H(n)$ grows on the order of $\#(B_{n})$ (indeed, $\#(S_{n})$ grows exponentially with the same growth rate as $\#(B_{n})$). 

It follows that the number of length $n$ sample paths terminating at an element in $S_{k}$ with lifting degree less than $\mathcal{M} \cdot n/\log(n)$ is at most 
\[ J(k) \cdot T^{k}. \]

Now, fix some small $\epsilon>0$. By Theorem \ref{thm: limit of progress exists}, there is some function $f(n)$ satisfying $\lim_{n \rightarrow \infty} f(n)/n = 0$ so that for large enough $n$, a fraction of at least $1- \epsilon$ of all length $n$ sample paths terminate on some sphere $\mathcal{S}_{k}$ for $m \cdot n - f(n) \leq k \leq m \cdot n + f(n)$. It follows that, if $\mathcal{P}_{n}$ denotes the subset of length $n$ sample paths terminating on such a sphere, then $\mathcal{P}_{n}$ accounts for at least a proportion of $1-\epsilon$ of all length $n$ sample paths, and the number of length $n$ sample paths with lifting degree less than $\mathcal{M} \cdot n/\log(n)$ in $\mathcal{P}_{n}$ is at most

\[ J(m \cdot n - f(n)) \cdot T^{m \cdot n- f(n)} +...+ J(m \cdot n +f(n)) \cdot T^{m \cdot n + f(n)} =: T(n). \]

On the other hand, the number of length $n$ sample paths in $\mathcal{P}_{n}$ terminating at a point with lifting degree at least $\mathcal{M} \cdot n / \log(n)$ is
\[ H(m \cdot n- f(n)) \cdot T^{m \cdot n- f(n)} +... + H(m \cdot n + f(n)) \cdot T^{m \cdot n + f(n)} =: G(n). \]
We claim that 
\[ \lim_{n \rightarrow \infty}\frac{T(n)}{G(n)} = 0. \]
If this is so, then it would follow that for $n$ sufficiently large, the above ratio is at most $\epsilon$, and therefore that the proportion of length $n$ sample paths with lifting degree less than $\mathcal{M} \cdot n/\log(n)$ is at most $\epsilon + \epsilon \cdot (1-\epsilon)$. Let $\epsilon \rightarrow 0$ and we are done. 

It therefore remains to prove that $G(n)$ overwhelms $T(n)$ as $n \rightarrow \infty$. Fixing $n$, choose $k, m \cdot n - f(n) \leq k \leq m \cdot n + f(n)$ maximizing the function $T^{k}$. Then 
\[ T(n) \leq 2f(n) \cdot J(m \cdot n + f(n)) \cdot T^{k}, \hspace{2 mm} \mbox{and} \hspace{2 mm} G(n) \geq H(k) \cdot T^{k}.\]
Therefore, 
\[ \frac{T(n)}{G(n)} \leq \frac{2f(n) \cdot J( m \cdot n +f(n)) \cdot T^{k}}{H(k) \cdot T^{k}} = \frac{2f(n) \cdot J(m \cdot n +f(n))}{H(k)}. \]
Since $k \geq m \cdot n- f(n)$, the denominator grows exponentially in $n$ with growth rate at least that of $\#(B_{m \cdot n})$. On the other hand, the growth rate of the numerator is at most that of $\#(B_{m \cdot n/2})$. This completes the proof.

\end{proof}

We conclude this section with a weaker version of the $\bP_{w}$ piece of Theorem \ref{thm: simple lifting degree for punctures} that holds for a much wider class of generating sets.

Theorem $1.1$ of Sisto-Taylor \cite{ST19}  states that if $Q$ is an infinite index quasiconvex subgroup of a hyperbolic group $G$ and $\mathcal{S}$ is a fixed finite generating set containing a generating set for $Q$, then if $\bP_{w}$ denotes the probability operator induced by a random walk on the Cayley graph $\Gamma_{\mathcal{S}}$ driven by some $\mu$ with finite support generating $G$, there is some $C>0$ so that
\[ \bP_{w}( \max_{xQ \in G/Q} d_{xQ}(\pi_{xQ}1, \pi_{xQ}w_{n})  \in [C^{-1}\log(n), C \log(n)]) \rightarrow 1. \]
Here $\pi_{xQ}$ denotes the nearest point projection to the left coset $xQ$ (well-defined because $Q$-- and therefore $xQ$-- is quasiconvex).

In our context, $G$ will be $\pi_{1}(S)$ for $S$ a surface with non-empty boundary and $Q$ will be the infinite cyclic subgroup associated to a boundary component. Choose a generating set for $\pi_{1}(S)$ containing a generator for the infinite cyclic subgroup associated to the boundary component. We will identify $\mbox{Cay}_{S}(G)$ with its image under its orbit map $\mathcal{O}$ into the universal cover $\mathcal{U}$ which, after choosing a complete hyperbolic metric on $S$ with totally geodesic boundary, we can associate with a subset of $\mathbb{H}^{2}$. 

With this interpretation, the work of Sisto-Taylor \cite{ST19} implies that the the geodesic segment from $\mathcal{O}(1)$ to $\mathcal{O}(w_{n})$ projects to a path of length on the order of $\log(n)$ to the geodesic in $\mathbb{H}^{2}$ projecting to $\partial S$. 
This implies that there is an annular cover $A$ of $S$ and a lift of $w_n:S^1\to S$ to $\widetilde w_n:\bR\to A$ so that both ends of $\widetilde w_n$ lie on one of the two ends of $A$, and so that $\widetilde{w_{n}}$ self-crosses on the order of $\log(n)$ times. Lower bounds on the degree of $w_n$ can now be made accessible via the following lemma, motivated by the work in \cite{Gaster16}:

\begin{lemma}
\label{lem:degree}
Suppose that $\pi:A\to S$ is an annular cover, and $\widetilde{\gamma}$ is a lift of the curve $\gamma:S^1\to S$ to $\widetilde \gamma: \bR\to A$ so that both ends of $\widetilde \gamma$ lie on a common end of $A$. 
Then $\deg\gamma\ge \iota(\widetilde{\gamma},\widetilde{\gamma})$.
\end{lemma}

\begin{proof}
Suppose that $p':S'\to S$ is a finite-sheeted cover of $S$ so that $\gamma$ has a simple elevation $\gamma'\subset S'$, and so that $\deg p'=\deg \gamma$.
Let $\alpha\subset S$ be the image of the core curve of $A$. Because $p'$ is a finite-sheeted cover, there is a smallest integer $r\ge 1$ so that $\alpha^r$ admits an elevation $\widetilde \alpha$ to $S'$ that is `next to' $\gamma'$.

Consider the degree $r$ covering $A_r\to A$, and let $\widetilde \gamma_r \subset A_r$ be a lift of $\widetilde \gamma$.
By the lifting criterion, the map $A_r\to S$ admits a lift to $A_r\to S'$ so that the core curve of $A_r$ maps to $\widetilde \alpha$, and under which $\widetilde\gamma_r$ maps to $\gamma'$.
Because $\gamma'$ is simple, $\widetilde \gamma_r$ is simple as well. 
Observe that the self-intersection number $m:=\iota(\widetilde \gamma,\widetilde \gamma)$ measures the number of times $\widetilde \gamma$ winds around the core of $A$. 
Because $\widetilde \gamma_r$ is simple (i.e.~embedded), we must have $r\ge m$.

It is evident that $\deg p' \ge r$, as points on $\alpha$ have at least $r$ preimages under $p'$. Therefore $\deg p' \ge m$ as claimed.
\end{proof}

It follows that when $S$ has a boundary component $b$ and we choose a finite generating set $\mathcal{S}$ containing a generator for the infinite cyclic subgroup coming from $b$, then if $\mu$ is \textit{any} distribution with support $\mathcal{S}$ and $\bP_{w}$ is the probability operator corresponding to a random walk on $\Cay_{\mathcal{S}}$ driven by $\mu$, there is $K$ so that
\[ \lim_{n \rightarrow \infty} \bP_{w} \left[ \deg(w_{n}) \geq K \cdot \log(n) \right] \rightarrow 1.\]

\section{Pushing-point dilatation of random curves} \label{section: point push}

We conclude the paper with a brief proof of Theorem \ref{thm: point push}. 

\begin{proof} For $\bP_{w}$, Theorem \ref{thm:MT} implies that the curve complex translation length $\tau_{\mathcal{C}}(f_{w_{n}})$ is larger than $L \cdot n$ with high $\bP_{w}$ probability. It then  follows from the coarse-Lipschitzness and $\mathcal{MCG}$-equivariance of the systole map $\mbox{sys}$ (see Section \ref{subsec: CC}) that the Teichm{\"u}ller translation length $\tau_{\cT}(f_{w_{n}})$ is at least some $(L/K) \cdot n$ with high $\bP_{w}$ probability. 

Since Teichm{\"u}ller translation length is nothing but the log of dilatation, one then has 
\[ \mbox{dil}(f_{w_{n}}) \geq e^{(L/K) \cdot n}\]
with high $\bP_{w}$ probability. 

On the other hand, there is some $J>0$ so that 
\[ \mbox{dil}(f_{w_{n}}) \leq e^{J \cdot n}, \]
just because the orbit map of $\mathcal{MCG}(S)$ into $\cT(S)$ is (coarsely) Lipschitz and $w_{n}$ has word length at most $n$. 

On the other hand, Theorem \ref{thm: intersection} gives the existence of some $L'\ge 1$ so that (with high $\bP_{w}$-probability),
\[ \frac{n^{2}}{L'} \leq i(\gamma_{n}, \gamma_{n}) \leq L' \cdot n^{2}. \]

It follows that with high $\bP_{w}$ probability, one has 
\[ e^{(L/K) \cdot \sqrt{i(\gamma_{n}, \gamma_{n})/L'}} \leq \mbox{dil}(f_{w_{n}}) \leq e^{J \cdot \sqrt{L' \cdot i(\gamma_{n}, \gamma_{n}) }}. \]
For $\bP_{g}$, the proof is identical, using Theorem \ref{thm: GTT} in place of Theorem \ref{thm:MT}, and using the $\bP_{g}$ portion of Theorem \ref{thm: intersection}.

\end{proof}

\bibliographystyle{acm}
\bibliography{main}

\end{document}